\documentclass[11pt,a4paper]{article}
\usepackage[T1]{fontenc}
\usepackage{cite}
\usepackage{amsmath}
\usepackage{amsthm}
\usepackage{amssymb}
\usepackage{amscd}
\usepackage{graphicx}
\usepackage{epsfig}
\usepackage{color}
\usepackage{mathrsfs}
\usepackage{mathtools}
\usepackage{amsfonts}
\usepackage{stmaryrd}
\usepackage{hyperref}

\renewcommand{\mod}{\operatorname{mod}}

\newcommand{\Class}{\mathscr{C}}
\newcommand{\Cinfty}{\Class^\infty}

\usepackage[left=2.5cm,right=2.5cm,top=3cm,bottom=3.5cm]{geometry}

\newtheorem{theorem}{Theorem}[section]
\newtheorem{proposition}[theorem]{Proposition}

\newtheorem{corollary}[theorem]{Corollary}

\theoremstyle{definition}
\newtheorem{example}[theorem]{Example}

\newtheorem{remark}[theorem]{Remark}
\newtheorem{definition}[theorem]{Definition}

\newcommand*{\NN}{\mathbb{N}}

\newcommand*{\ZZ}{\mathbb{Z}}

\newcommand*{\RR}{\mathbb{R}}
\newcommand*{\R}{\mathbb{R}}

\newcommand{\MM}{\mathcal{M}} 
\newcommand{\CM}{\Class_\MM}

\newcommand{\be}{\begin{equation}}
\newcommand{\ee}{\end{equation}}

\newcommand{\bea}{\begin{eqnarray}}
\newcommand{\eea}{\end{eqnarray}}
\newcommand{\beas}{\begin{eqnarray*}}
\newcommand{\eeas}{\end{eqnarray*}}

\newcommand{\mn}{{\medskip\noindent}}

\newcommand{\no}{{\noindent}}

\newcommand\restr[2]{{
  \left.\kern-\nulldelimiterspace 
  #1 
  \right|_{#2} 
}}
\newcommand{\T}{{\mathsf T}} 
\newcommand{\sT}{{\mathsf T}}

\let\id\Id

\newcommand*{\liedv}[1]{\mathcal{L}_{#1}}

\newcommand{\n}{\nabla}
\newcommand{\pa}{\partial}

\def\End{\mathsf{End}}

\def\la{\langle}
\def\ran{\rangle}

\def\xd{\mathrm{d}}

\newcommand{\za}{\alpha}
\newcommand{\zb}{\beta}
\newcommand{\zf}{\varphi}

\newcommand{\zi}{\iota}
\newcommand{\zw}{\omega}
\newcommand{\ti}{\times}
\def\wa{\operatorname{w}}
\newcommand{\Exp}{\operatorname{Exp}}

\def\bl{\big( }
\def\br{\big) }
\def\Bl{\Big( }
\def\Br{\Big) }
\newcommand{\nm}[1]{\ensuremath{\Vert #1 \Vert}}
\mathchardef\za="710B  
\mathchardef\zb="710C  
\mathchardef\zg="710D  
\mathchardef\zd="710E  
\mathchardef\zve="710F 
\mathchardef\zz="7110  
\mathchardef\zh="7111  
\mathchardef\zvy="7112 
\mathchardef\zi="7113  
\mathchardef\zk="7114  
\mathchardef\zl="7115  
\mathchardef\zm="7116  
\mathchardef\zn="7117  
\mathchardef\zx="7118  
\mathchardef\zp="7119  
\mathchardef\zr="711A  
\mathchardef\zs="711B  
\mathchardef\zt="711C  
\mathchardef\zu="711D  
\mathchardef\zvf="711E 
\mathchardef\zq="711F  
\mathchardef\zc="7120  
\mathchardef\zw="7121  
\mathchardef\ze="7122  
\mathchardef\zy="7123  
\mathchardef\zf="7124  
\mathchardef\zvr="7125 
\mathchardef\zvs="7126 
\mathchardef\zf="7127  
\mathchardef\zG="7000  
\mathchardef\zD="7001  
\mathchardef\zY="7002  
\mathchardef\zL="7003  
\mathchardef\zX="7004  
\mathchardef\zP="7005  
\mathchardef\zS="7006  
\mathchardef\zU="7007  
\mathchardef\zF="7008  
\mathchardef\zW="700A  

\begin{document}
\title{\textbf{Homogeneity actions, N-manifolds, \\ and the Frobenius theorem}
\footnote{The research of JG was funded by the National Science Centre (Poland) within the project WEAVE-UNISONO, No. 2023/05/Y/ST1/00043.}
}
\date{\today}
\author{\\  \Large Janusz Grabowski
        \\ \Large Asier L\'opez-Gord\'on\\ \\
          {\it Institute of Mathematics}\\
                {\it Polish Academy of Sciences}
}
\maketitle
\begin{abstract}\noindent
This paper is devoted to $\NN$-graded supermanifolds $\MM$ whose grading is induced by a homogeneity action, i.e., a smooth action $\R\ni t\mapsto h_t$ of the multiplicative monoid of real numbers on $\MM$. We show that the map $h_0$ is a smooth retraction onto a submanifold $M=h_0(\MM)$, and that $h_0:\MM\to M$ is a fiber bundle with typical fiber $\R^{m|n}$. Using this homogeneity approach, we obtain a simple proof of a homogeneous version of the Frobenius theorem. If, in addition, $h_{-1}$ acts as the parity operator on $\MM$, we provide a geometric characterization equivalent to the recent definition of $\NN$-manifolds due to Bursztyn, Cueca, and Mehta in terms of sheaves of graded algebras with prescribed local models. As a consequence, the homogeneous Frobenius theorem for $\NN$-manifolds proved by these authors appears as a special case of our more general result.

\medskip\noindent
{\bf Keywords:} supermanifold; N-manifold; $\NN$-graded manifold; homogeneity; distribution.
\par

\smallskip\noindent
{\bf Mathematics Subject Classification:} 53C12; 58A30; 58A50; 58C50
\end{abstract}

\section{Introduction}
The present paper is devoted to $\NN$-graded manifolds, also known as \emph{N-manifolds}, that is, supermanifolds admitting local coordinates of degrees $0,1,\ldots,n$ which commute according to the graded sign rule.

\mn Roytenberg in \cite{Roytenberg:2002} defined an N-manifold as an $\NN$-graded supermanifold whose underlying $\ZZ_2$-grading is induced by the $\NN$-grading modulo $2$, so that coordinates of even (respectively, odd) weight are even (respectively, odd). On the other hand, in \cite{Severa:2005}, \v{S}evera states that ``an N-manifold (shorthand for `non-negatively graded manifold') is a supermanifold with an action of the multiplicative semigroup $(\RR,\cdot)$ such that $-1$ acts as the parity operator.'' Although the equivalence of these two descriptions is generally regarded as folklore, to the best of our knowledge, no complete proof has appeared in the literature. However, it is worth noting that supermanifolds equipped with smooth actions of the monoid $(\RR,\cdot)$ were already studied by J\'o\'zwikowski and Rotkiewicz \cite{Jozwikowski:2016}.

Recently, Bursztyn, Cueca, and Mehta \cite{B.C.M2025} proposed another definition of $\NN$-graded manifolds and established a Frobenius theorem for graded involutive distributions. In their approach, an \emph{$\NN$-manifold} consists of a smooth manifold $M$ endowed with a sheaf of graded algebras locally isomorphic to
\[
\Cinfty(\RR)\otimes \operatorname{S}^\bullet V,
\]
where $\operatorname{S}^\bullet V$ is the graded symmetric algebra of a graded vector superspace
\[
V=V_1\oplus\cdots\oplus V_n,
\]
where the parity of $V_k$ is the parity of $k$. They prove that these objects are equivalent to admissible coalgebra bundles and use this equivalence to establish a Frobenius theorem asserting that an involutive graded distribution is locally generated by homogeneous coordinate vector fields. Their approach relies on a sophisticated algebraic framework involving admissible coalgebra bundles and graded sheaves.

\mn In the present paper, we define an N-manifold as a supermanifold $\MM$ ($\ZZ_2$-graded manifold, see \cite{Kostant:1977,Leites:1980}) endowed with a \emph{homogeneity action}, that is, a smooth action
\[
h:\RR\times\MM\longrightarrow\MM
\]
of the multiplicative monoid $(\RR,\cdot)$ satisfying two additional conditions: $h_0:\MM\to|\MM|$ is the projection onto the body, and $h_{-1}$ coincides with the parity operator. We prove that $\MM$ admits an atlas of local trivializations
\[
h_0^{-1}(U)\simeq U\times\RR^{\alpha|\beta},
\]
equipped with homogeneous coordinates $(x^a,u^i)$ satisfying
\[
h_t(x^a)=x^a,\qquad
h_t(u^i)=t^{w_i}u^i,\qquad w_i\in\NN.
\]
Here $\RR^{\alpha|\beta}\simeq\RR^\alpha\otimes\bigwedge^\bullet\RR^\beta$ denotes the standard superspace. This immediately yields the equivalence of Roytenberg's and \v{S}evera's definitions. We further establish the equivalence with the definition of Bursztyn, Cueca, and Mehta.

More generally, we show that many constructions of classical differential geometry and supergeometry admit natural homogeneous counterparts. Once the relevant geometric objects are required to be compatible with the homogeneity action, classical arguments extend almost verbatim to the graded setting. In particular, by incorporating suitable homogeneity assumptions into the standard proof of the Frobenius theorem for supermanifolds, we obtain a Frobenius theorem for involutive homogeneous distributions. As a consequence, the main theorem of \cite{B.C.M2025} is recovered as a special case by a direct differential-geometric argument.

In the literature, a supermanifold $\mathcal{M}$ is defined either as a set endowed with compatible smooth charts of (super)coordinates taking values in the exterior algebra $\bigwedge\nolimits^\bullet(V)$, or as an ordinary manifold $|\MM|$ equipped with a sheaf of functions which is locally isomorphic to $\Cinfty(U)\otimes \bigwedge\nolimits^\bullet(V)$ \cite{Carmeli:2011,D.M1999,Manin1997}.
Following Rogers \cite{Rogers2007}, we shall refer to the former as \emph{concrete} supermanifolds, and to the latter as \emph{algebro-geometric} supermanifolds. Both definitions are actually interchangeable, namely, the categories are equivalent \cite{Batchelor1980,Rogers1980,Rogers2007}. In particular, there is an isomorphism of sheaves that permits identifying a system of coordinates on an algebro-geometric supermanifold with a system of coordinates on the corresponding concrete supermanifold and vice versa.

The Frobenius theorem can be proven in a short and simple manner by separating its algebraic and analytic aspects, namely, by first proving that any involutive distribution can be locally generated by commuting vector fields $X_i$, and then showing that there exists a system of local coordinates $(x^a)$ which commonly straightens commuting linearly-independent vector fields, i.e.,~$X_i = \pa_{x^i}$. This proof for ordinary (purely even) manifolds is due to Lundell~\cite{Lundell:1992}. As Shander \cite{Shander:1980} proved, on supermanifolds there also exist straightening local coordinates for commuting linearly-independent vector fields (with the non-trivial homological condition $[X_i, X_i]=2X_i\circ X_i=0$ for odd vector fields). To prove the Frobenius theorem for involutive graded distributions, we will first show that an involutive distribution is locally generated by homogeneous vector fields in involution, and then show the existence of homogeneous straightening coordinates for those vector fields.

\bigskip\noindent
The paper is organized as follows. In Section~\ref{sec:Bursztyn}, we recall the main definitions and results by Bursztyn, Cueca, and Mehta. In Section~\ref{sec:homogeneity}, we introduce homogeneity supermanifolds, present several examples, and prove that $M=h_0(\MM)$ is a submanifold of $\MM$ and
\[
h_0:\MM\to M
\]
is a fiber bundle admitting local homogeneous coordinates. Although this result already appears in \cite{Jozwikowski:2016}, we include a complete proof for the reader's convenience and to make the paper self-contained.
In Section~\ref{sec:N_manifolds} we establish the equivalence of the various definitions of $\NN$-graded manifolds. Finally, in Section~\ref{sec:Frobenius} we compare the corresponding notions of graded distributions and prove the homogeneous Frobenius theorem.

\section[The sheaf-theoretic approach]{The sheaf-theoretic approach to $\NN$-graded manifolds}\label{sec:Bursztyn}
Let us recall the definitions used by Bursztyn, Cueca, and Mehta in the recent paper \cite{B.C.M2025}.
\begin{definition}
  Let $V = \bigoplus_{i=1}^n V_i$ be an $\NN$-graded vector space, where $n$ is a non-negative integer. An \emph{algebro-geometric $\NN$-graded manifold of degree $n$} is a ringed space $\MM =(M, \CM)$ consisting of a smooth manifold $M$ endowed with a sheaf of graded commutative algebras such that any point in $M$ admits a neighborhood $U$ with an isomorphism
  \begin{equation}\label{eq:sheaf_local_model}
    \restr{\CM}{U} \cong \Cinfty_U \otimes \operatorname{S}^\bullet V\, ,
  \end{equation}
  where $ \operatorname{S}^\bullet V$ denotes the graded symmetric algebra of $V$. The subsheaf of homogeneous functions of degree $l$ is given by
  $$\restr{\CM^l}{U}\cong \Cinfty_U \otimes \operatorname{S}^l V\, .$$
  The dimension of $\MM$ is $m_0|\cdots|m_n$, where $m_0 = \dim M$ and $m_i = \dim V_i$ for each $i=1, \ldots, n$. A chart
  \begin{equation}\label{eq:sheaf_chart}
    \left(U\subseteq M; x^\alpha, e_i^{\beta_i}\right)\, , \quad 1\leq \alpha \leq m_0\, , \quad 1\leq \beta_i \leq m_i\, , \quad 1\leq i\leq n
  \end{equation}
  of $\MM$ consists of a chart $(U; x^\alpha)$ of $M$ such that the condition \eqref{eq:sheaf_local_model} holds in $U$, and $\left(e_i^{\beta_i}\right)_{\beta_i=1}^{m_i}$ is a basis of $V_i$. Any homogeneous function over $U$ can be expressed as a sum of functions that are smooth in $x^i$ and polynomial in $e_i^{\beta_i}$.

  \mn A \emph{morphism of algebro-geometric $\NN$-graded manifolds of degree $n$} $\Psi\colon \MM \to \mathcal{N}$ is a morphism of ringed spaces, given by a pair $\Psi = (\psi, \psi^\sharp)$, where $\psi\colon M \to N$ is a smooth map and $\psi^\sharp \colon \Class_{\mathcal{N}} \to \psi_\ast \CM$ is a morphism of sheaves of algebras over $N$ which preserves the degrees. Algebro-geometric $\NN$-graded manifolds of degree $n$ with these morphisms form the category~$\mathsf{Man}_{\mathsf{alg}}^n$.
\end{definition}
\no The parity reversing functor is the endofunctor in the category of $\ZZ_2$-graded vector spaces defined by
$$\Pi\colon V \to \Pi V\, , \quad (\Pi V)_0 = V_1\, , \quad (\Pi V)_1 = V_0\, . $$
It relates the exterior and symmetric algebras of $V$ as follows:
$$\operatorname{S}^\bullet V = \bigwedge\nolimits^\bullet \Pi V\, , \quad  \bigwedge\nolimits^\bullet V = \operatorname{S}^\bullet \Pi V\, .$$
Every $\NN$-graded vector space is clearly also $\ZZ_2$-graded, since one can naturally define the $\ZZ_2$-degree as the $\NN$-degree modulo $2$.
Consequently, the local form \eqref{eq:sheaf_local_model} of the sheaf on an $\NN$-manifold can be rewritten as
 \begin{equation}\label{eq:sheaf_local_model_wedge}
    \restr{\CM}{U} \cong \Cinfty_U \otimes \bigwedge\nolimits^\bullet W\, ,
  \end{equation}
where $W\coloneqq \Pi V$.

\begin{definition}\label{def:distribution_Bursztyn}
  Let $\MM=(M, \CM)$ be an algebro-geometric $\NN$-graded manifold of degree $n$ and dimension $m_0|\cdots |m_n$. Given an open subset $U\subseteq M$, a vector field of degree $k$ on $\restr{\MM}{U}$ is a degree $k$ derivation $X$ of $\CM(U)$, i.e., an $\RR$-linear map $X\colon \CM(U)\to \CM(U)$ with the property that, for all $f, g\in \CM(U)$ with $f$ homogeneous,
  $\deg(X f) = \deg(f) + k$ and
  $$X(fg) = X(f) g + (-1)^{k \deg(f)} f X(g)\, .$$
  The sheaf of all vector fields on $\MM$ is denoted by $\mathscr{T}_\MM^\bullet$. The graded commutator of vector fields is defined by
  $$[X, Y] = XY - (-1)^{\deg(X)\, \deg(Y)} Y X$$
  for homogeneous vector fields $X$ and $Y$, and extended to any vector fields by linearity.

  A distribution on $\MM$ of rank $(d_0|\cdots |d_n)$ is a graded subsheaf $D\subseteq \mathscr{T}_\MM^\bullet$ of $\CM$-modules such that any point in $M$ admits an open neighborhood $U$ in which $D(U)$ is generated by linearly independent vector fields
  $$\{X_k^{i_k}\}\, , \quad  0\leq k \leq n\, , \quad 1\leq i_k \leq d_k$$
  which are homogeneous with $\deg(X_k^{i_k}) = -k$.
\end{definition}
One can equivalently view vector fields on $\MM$ as sections of tangent bundles \cite{Mehta2006}, and distributions as subbundles of $\T \MM$ which are locally freely generated by vector fields.

\bigskip\no
The main result by Bursztyn, Cueca, and Mehta \cite[Theorem 6.4]{B.C.M2025} is the following.
\begin{theorem}[Frobenius theorem for algebro-geometric $\NN$-graded manifolds]\label{theorem:Frobenius_Bursztyn}
    Let $D$ be an involutive distribution of rank $d_0|\cdots |d_n$ on an algebro-geometric $\NN$-graded manifold $\MM=(M, \CM)$ of degree $n$ and dimension $m_0|\cdots |m_n$. Then, around any point in $M$ there is a chart $(U; x^\alpha, e_j^{\beta_j})$ such that
  $$D(U) = \left\la \pa_{x^\alpha}, \pa_{e_j^{\beta_j}}\mid 1\leq \alpha \leq d_0\, ,\ 1\leq j\leq n\, ,\ 1\leq \beta_j \leq d_j \right\ran\, .$$
\end{theorem}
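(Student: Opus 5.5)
The plan follows the Lundell--Shander strategy described in the introduction, made compatible with the grading. It splits into an algebraic step and an analytic step.

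\emph{Step 1 (commuting homogeneous generators).} Fix a point and a chart $(U;x^\alpha,e_j^{\beta_j})$. Choose local homogeneous generators $X_k^{i_k}$ of $D(U)$ with $\deg X_k^{i_k}=-k$. Expand them in the coordinate fields: $X_k^{i_k}=\sum A\,\pa_{x^\alpha}+\sum B\,\pa_{e_j^{\beta_j}}$, where $\pa_{e_j^{\beta_j}}$ has degree $-j$.

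Linear independence means that the matrix of these coefficients, evaluated at the body (i.e.\ after setting all $e$'s to zero), has full rank. Since only degree-$0$ functions survive there, this matrix is block diagonal by degree. So, after permuting coordinates, each block $(\text{degree-}0\text{ coefficients of }X_k\text{ on }\pa_{e_k})$ contains an invertible $d_k\times d_k$ minor.

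Because the entries of such a minor are degree-$0$ functions whose body is invertible, the minor can be inverted within degree-$0$ functions. Multiplying by this inverse, degree by degree, gives new homogeneous generators of the form
\[
Y_k^{i}=\pa_{e_k^{i}}+\sum_{\beta>d_k\ \text{or other coordinates}} (\ldots),
\]
with $Y_k^i$ still of degree $-k$ and the same span. Unipotent triangular corrections in the lower-degree parts remove the cross terms along the chosen $\pa_{e_l^{j}}$ with $l\ne k$, since those coefficients have degree $l-k$. After this, each $Y_k^i$ equals $\pa_{e_k^i}$ plus a combination of the non-chosen coordinate fields only.

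By involutivity, $[Y,Y']$ lies in $D$. Its components along the chosen coordinate fields vanish, because the leading parts $\pa_{e_k^i}$ commute. Writing $[Y,Y']=\sum c\,Y''$ and reading off the chosen components forces $c=0$. So the $Y$'s pairwise supercommute, and in particular $[Y,Y]=0$ for odd $Y$.

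\emph{Step 2 (homogeneous straightening).} This is the main obstacle. We need coordinates in which the $Y_k^i$ become $\pa_{x^\alpha}$, respectively $\pa_{e_k^{i}}$, while the new coordinates remain homogeneous of the right degrees. This amounts to staying inside the $\NN$-graded atlas rather than the plain supermanifold atlas.

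The approach is to apply Shander's straightening theorem for commuting, linearly independent even and odd vector fields, but to build the coordinates explicitly via flows. The new coordinates are defined by the map
\[
(s,\tau,z)\mapsto \exp\Bigl(\sum s_\alpha Y_0^\alpha+\sum\tau_k^i Y_k^i\Bigr)(z),
\]
where $z$ ranges over the slice $\{x^\alpha=0,\ e_k^i=0\}$. The variables $s,\tau$ are assigned degrees $0$ and $k$, respectively.

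The key observation is that each exponent term $\tau_k^i Y_k^i$ has total degree $0$. Hence the induced pullback of functions preserves degree, with the slice coordinates keeping their degrees. For the degree-$0$ part this is an honest flow in the body directions. For $k\ge1$, the $Y_k^i$ are nilpotent on the polynomial-in-$e$ part, so the exponential is a finite sum and causes no convergence issue.

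Commutativity makes the map well defined and gives $Y_k^i=\pa_{\tau_k^i}$. Invertibility at the body follows from the normal form of Step 1. One then checks that the resulting sheaf isomorphism is degree preserving, hence an $\NN$-graded chart. Alternatively, one can invoke the paper's later homogeneity-action framework, where $h_t$-equivariance of the Shander coordinates can be obtained by averaging or degree projection.

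Finally, since $D(U)$ is spanned by the $Y$'s, $D(U)=\la\pa_{x^\alpha},\pa_{e_j^{\beta_j}}\ran$ in the new chart, as claimed.
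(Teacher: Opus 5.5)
Your proposal is correct and follows essentially the same route as the paper: homogeneous generators brought to the normal form $\pa_{(\cdot)}+(\text{non-chosen terms})$, pairwise supercommutativity forced by involutivity, and straightening by composing the commuting flows from a slice, as in Tuynman/Shander. The only difference is that you work directly in the sheaf-theoretic category, giving the flow parameters degree $k$ so that the exponent has degree $0$, whereas the paper proves the statement for $h$-supermanifolds, transports it through the equivalence $\mathsf{Man}_{\mathsf{alg}}^n\simeq\mathsf{NMan}^n$, and justifies the homogeneity of the flow coordinates only briefly; your degree-$0$ exponent observation makes that step explicit.
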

\no
Their proof relies on several sophisticated equivalences translating constructions from the category~$\mathsf{Man}_{\mathsf{alg}}^n$ to the category of admissible $n$-coalgebra bundles. This result is a special case of our Frobenius theorem for homogeneity supermanifolds, whose proof is simpler (see Section~\ref{sec:Frobenius}).

\section{Homogeneity supermanifolds}\label{sec:homogeneity}
In this section, we present homogeneity supermanifolds and their main properties.

\begin{definition}
\
\begin{itemize}
\item
A \emph{homogeneity action on a supermanifold $\MM$} is a smooth action $h\colon \RR \times \MM \to \MM$ of the multiplicative monoid $(\RR, \cdot)$ of real numbers. More precisely, $h$ is a morphism of supermanifolds defined by a collection of maps $h_t \colon \MM \to \MM,\, t\in \RR$ such that
\be\label{hs}h_t\circ h_s=h_{ts}\quad\text{and}\quad h_1=\id_\MM.\ee
In this case, we call $(\MM,h)$ an \emph{$h$-supermanifold}.
\item A morphism of two $h$-supermanifolds $(\MM_1, h^1)$ and $(\MM_2, h^2)$ is a morphism of supermanifolds $\Phi\colon \MM_1 \to \MM_2$ intertwining the actions $h_1$ and $h_2$,
\be\label{mor}
\Phi\circ h^1_t=h^2_t\circ\Phi\quad\text{for all}\quad t\in\R.
\ee
This gives rise to the obvious definition of the \emph{category of  $h$-supermanifolds}, $\mathsf{hMan}$.
\item The vector field
\be\label{wvf}\n=\frac{\pa}{\pa t}\,\Big|_{t=1}h_t\ee
we call the \emph{weight vector field} of the homogeneity action $h$. In other words,
$$\n(f)=\frac{\pa}{\pa t}\,\Big|_{t=1}(f\circ h_t) \, , \quad \forall f \in \Cinfty(\MM).$$
\item A (local) function $f$ on $\MM$ is called \emph{homogeneous of weight $w\in \NN$} if
\be\label{weight}f\circ h_t = t^w\cdot f\quad\text{for all}\quad t\in\R,\ee
or equivalently, if $\n (f) = w\cdot f$.
In this case, we write $\wa(f)=w$.
\item A (local) tensor field $K$ on $\MM$ is homogeneous of \emph{weight} $w\in \ZZ$ if
$$\liedv{\n} K = w\cdot K\,,$$
where $\liedv{}$ denotes the Lie derivative.
\item A submanifold $\MM_0$ in an $h$-supermanifold $\MM$ we call a \emph{homogeneity submanifold} (\emph{$h$-submanifold} for short) if $\MM_0$ is invariant with respect to the $\R$-action,
\be\label{sub} h_t(\MM_0)\subset \MM_0\quad\text{for all}\quad t\in\R.
\ee
\end{itemize}
\end{definition}
\begin{example}
The multiplication $h_t$ by real numbers $t\in\R$ in a vector superbundle $\zt:E\to M$ (i.e.,~$h_t$ acts by multiplication by scalars on the fibers and the identity on the base space $M$) is a homogeneity action.
In this case, the weight vector field is called the \emph{Euler vector field}, and the mappings $h_t$ are  called \emph{homotheties}.
\end{example}
\begin{example}
The product
$$E=E_1\ti_ME_2\ti_M\cdots\ti_ME_k$$
of vector superbundles over a supermanifold $M$ is canonically an $h$-supermanifold with the action
$$h_t(v_1+v_2+\cdots+v_k)=t^1v_1+t^2v_2+\cdots+t^kv_k.$$
Such $h$-manifolds are called \emph{split $h$-manifolds}.
\end{example}
\no In what follows, we will deal exclusively with supermanifolds and generally skip the prefix `super'.

\mn The following is the super-analog of the well-known result on smooth retractions of purely even manifolds (see \cite[Theorem 1.13]{Kolar:1993}). It is of independent interest, but we will need the following corollary in the sequel.
\begin{proposition}
Let $\phi:\mathcal M\longrightarrow \mathcal M$ be a smooth idempotent endomorphism of a smooth supermanifold, i.e., $\phi^2=\phi$. Then $\phi$ has locally constant rank.
\end{proposition}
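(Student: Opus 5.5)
The proof will mimic the classical argument of \cite[Theorem 1.13]{Kolar:1993}, with one extra ingredient to control the odd directions. Let $x\in|\MM|$ and $y=\phi(x)$. First I must fix what rank means here. For a morphism of supermanifolds the rank at a point is the rank of the tangent map $\T_x\phi\colon \T_x\MM\to\T_{\phi(x)}\MM$. This map is even and splits into an even part $(\T_x\phi)_0$ and an odd part $(\T_x\phi)_1$. Constant rank then means that the pair $(\operatorname{rank}(\T_x\phi)_0\,|\,\operatorname{rank}(\T_x\phi)_1)$ is locally constant. I will also use the stronger fact that the super Jacobian has locally constant rank as a matrix over the local superfunctions, i.e., that the image of $\T\phi$ is locally a direct summand. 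I plan to prove the latter, since it is what is needed later to show that $\phi(\MM)$ is a submanifold.

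\textbf{Step 1: lower semicontinuity.} For the pointwise ranks, rank is lower semicontinuous: a nonvanishing minor of the reduced Jacobian stays nonvanishing nearby. This holds for the even and odd blocks separately, since the reduced tangent map is block-diagonal at each point. So near $x$ we have $\operatorname{rank}\T_z\phi\geq\operatorname{rank}\T_x\phi$.

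\textbf{Step 2: the reverse inequality via idempotency.} By the chain rule, $\T_{\phi(z)}\phi\circ\T_z\phi=\T_z\phi$. Hence $\operatorname{rank}\T_z\phi\leq\operatorname{rank}\T_{\phi(z)}\phi$, in each parity. At points $z=\phi(z')$ of the image we have $\T_z\phi\circ\T_z\phi=\T_z\phi$, so $\T_z\phi$ is a projection with $\operatorname{rank}\T_z\phi=\operatorname{tr}(\T_z\phi)$ in each parity block. These traces are continuous integer-valued functions of $z$ along $|\phi|(|\MM|)$, hence locally constant there. Now take $z$ near $x$. Then $\phi(z)$ is near $y$, and Step 1 gives $\operatorname{rank}\T_z\phi\geq\operatorname{rank}\T_x\phi=\operatorname{rank}\T_y\phi$. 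Step 2 gives $\operatorname{rank}\T_z\phi\leq\operatorname{rank}\T_{\phi(z)}\phi=\operatorname{rank}\T_y\phi$, using local constancy along the image near $y$. The two inequalities together give equality.

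\textbf{Step 3: upgrade to the super Jacobian over the structure sheaf.} This is the main obstacle. Pointwise rank constancy of the reduced maps does not immediately give a local normal form, because nilpotent entries could a priori change the rank of the Jacobian as a module map. Along the image I would replace the trace argument by an algebraic one. Set $P=\T\phi|_{\text{image}}$, viewed as an even endomorphism of the free module $\phi^*\T\MM$ restricted near $y$. Idempotency makes $P^2=P$ an idempotent matrix over a local superalgebra. Its image is therefore a projective, hence free, direct summand, with rank read off from the body. Then, near $x$, I would choose coordinates on the target in which the image of $\T_y\phi$ is spanned by the first coordinate directions. The block of $\T\phi$ formed by the corresponding partial derivatives has invertible reduced part, so it is invertible as a supermatrix, since even invertibility is detected on the body. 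This gives a local factorization of $\phi$ through a submersion. Combined with the equality $\T\phi=\T\phi\circ\T\phi$ pulled back along $\phi$, it shows the image module of $\T\phi$ near $x$ has exactly the same free rank. That establishes constant rank in the strong sense, to which the super constant rank theorem then applies.
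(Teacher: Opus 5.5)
\noindent The gap is in Step 2, in the chain $\operatorname{rank}\T_z\phi\geq\operatorname{rank}\T_x\phi=\operatorname{rank}\T_y\phi$ with $y=\phi(x)$. The equality holds only when $x$ lies in $|\phi|(|\MM|)$, because then $x=y$. For general $x$, the chain rule $\T_x\phi=\T_y\phi\circ\T_x\phi$ gives only $\operatorname{rank}\T_x\phi\leq\operatorname{rank}\T_y\phi$, and this inequality can be strict.

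This cannot be repaired, because the statement itself is false away from the image. Take $\R^2$ as a purely even supermanifold and choose $\chi\in\Cinfty(\R)$ with $\chi(0)=1$, $\chi>0$ on $(-1,1)$ and $\chi=0$ outside $(-1,1)$. Put $\phi(x,y)=(x\chi(y),0)$. Then $\phi\circ\phi=\phi$ since $\chi(0)=1$. The Jacobian $\begin{pmatrix}\chi(y)&x\chi'(y)\\0&0\end{pmatrix}$ has rank $1$ for $|y|<1$ and rank $0$ wherever $|y|\geq 1$, since $\chi$ vanishes to infinite order at $\pm1$. At $p=(0,1)$ you get $\operatorname{rank}\T_p\phi=0<1=\operatorname{rank}\T_{\phi(p)}\phi$, which is exactly where your equality fails, and the rank is not locally constant at $p$.

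Step 3 inherits the same restriction. Your claim that the block of $\T\phi$ on the directions spanning $\operatorname{Im}\T_y\phi$ has invertible reduced part at $x$ requires $\operatorname{Im}\T_x\phi=\operatorname{Im}\T_y\phi$.

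\noindent Run at points $x\in|\phi|(|\MM|)$, your Steps 1--2 prove the classical Kol\'a\v{r}--Michor--Slov\'ak statement: $\phi$ has locally constant rank on an open neighbourhood of $\phi(\MM)$. This is all that Corollary~\ref{c1} and Theorem~\ref{theorem:homogeneous_trivialization} use, since they only need coordinates centred at $x_0\in|M|$.

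Your Step 3 concern is well founded: pointwise constancy on the body does not give constant rank in the super sense. Here you are more careful than the paper. The paper instead argues that $(\T\phi)^*$ is an idempotent on $\Omega^1_\MM$ with locally free image. That map is only $\phi^*$-semilinear, so $\Omega^1_\MM=\operatorname{Im}D\oplus\ker D$ is not a splitting of $\mathcal{O}_\MM$-modules, and the example above shows the argument cannot hold globally.

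The correct $\mathcal{O}_\MM$-linear idempotent is $Q=\phi^*(\T\phi)$ acting on $\phi^*\T\MM$. It is idempotent because $\T\phi=\phi^*(\T\phi)\circ\T\phi$, and this is what your $P=\T\phi|_{\text{image}}$ should be. Its image is a free direct summand and contains $\operatorname{Im}\T\phi$. At points of $|\phi|(|\MM|)$ both have the same fibre $\operatorname{Im}\T_x\phi$, so by Nakayama they coincide near such points. Restricting the statement to a neighbourhood of $\phi(\MM)$ and arguing this way gives a correct proof of the strong (super) constant-rank property that the constant rank theorem needs.
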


\begin{proof}
Let
$$
\phi^\ast:\mathcal O_{\mathcal M}\longrightarrow \mathcal O_{\mathcal M}
$$
be the corresponding morphism of sheaves. The condition $\phi^2=\phi$ is equivalent to $(\phi^\ast)^2=\phi^\ast $.
Hence, $\phi^\ast$ is a projector. The induced morphism on K\"ahler differentials,
$$D\coloneqq (\sT \phi)^\ast:\Omega^1_{\mathcal M}\longrightarrow \Omega^1_{\mathcal M},$$
is again idempotent,  $D^2=D$, therefore
$$\Omega^1_{\mathcal M}=\operatorname{Im}\bl D\br\oplus\ker\bl D\br.
$$
Thus $\operatorname{Im}(D)$ is a direct summand of the locally free $\mathcal O_{\mathcal M}$-module
$\Omega^1_{\mathcal M}$. Consequently, it is locally free.
Hence, locally,
$$\operatorname{Im}(D)\simeq\mathcal O_{\mathcal M}^{m|n},
$$
for some integers $m,n$. Since the rank of a locally free module is locally constant, the rank of $\phi$ is locally constant.

\end{proof}

\begin{corollary}\label{c1}
Every smooth idempotent endomorphism
$$\phi:\mathcal M\to\mathcal M,\qquad \phi^2=\phi,$$
is locally of constant rank. Therefore, by the constant rank
theorem, $\phi$ locally factors as
$$\mathcal M\xrightarrow{\ \rho\ } M \xrightarrow{\ \iota\ }\mathcal M,
$$
where $\rho$ is a submersion, $\iota$ is an embedding, and
$$\phi=\iota\circ\rho,\qquad\rho\circ\iota=\operatorname{id}_M.
$$
Thus, the image $M$ of $\phi$ is a smooth retract of $\mathcal M$.

\mn In particular, for every $x_0\in|M|$ there exist local coordinates $(x^1,\dots,x^r,y^1,\dots,y^n)$ on $\mathcal M$ centered at $x_0$, such that $\phi$ is locally isomorphic to the projection
$$(x^1,\dots,x^m,y^1,\dots,y^n)\longmapsto(x^1,\dots,x^m,0,\dots,0).$$
\end{corollary}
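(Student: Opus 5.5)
The plan is to take the preceding Proposition as given: $\phi$ has locally constant rank. Fix $x_0\in|\MM|$ lying in the image, i.e.\ $|\phi|(x_0)=x_0$; points of the image are exactly the fixed points of $|\phi|$, because $|\phi|^2=|\phi|$. We then apply the super version of the constant rank theorem near $x_0$. It gives coordinates $(z)$ around $x_0$ and $(z')$ around $\phi(x_0)=x_0$ in which $\phi$ becomes the linear projection onto the first $m|k$ coordinates. Here $m|k$ is the rank of $\mathsf T_{x_0}\phi$, which is a projector on $\mathsf T_{x_0}\MM$.

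The difficulty is that the two charts differ. To fix this I would build the coordinates directly from $\phi$ instead of relying on the constant rank theorem as a black box. Choose a splitting $\mathsf T_{x_0}\MM=\operatorname{Im}\oplus\ker$ of the idempotent $\mathsf T_{x_0}\phi$. Then pick functions $x^1,\dots,x^m$ (of mixed parity) near $x_0$ whose differentials at $x_0$ restrict to a basis of $\operatorname{Im}^*$ and vanish on $\ker$. Replace them by $\phi^*x^a$; this is possible because $\mathrm d(\phi^*x^a)_{x_0}=\mathrm dx^a_{x_0}\circ \mathsf T_{x_0}\phi$, which is the same covector on the image and zero on the kernel. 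Now they satisfy $\phi^*x^a=x^a$, since $\phi^*$ is a projector. Next choose $y^j$ with differentials forming a basis of $\ker^*$ that vanishes on $\operatorname{Im}$, and set $y^j:=u^j-\phi^*u^j$ for suitable $u^j$. Then $\phi^*y^j=0$, and at $x_0$ the differential is $\mathrm du^j\circ(\mathrm{id}-\mathsf T\phi)$, which is again independent on $\ker$. By the inverse function theorem for supermanifolds, $(x,y)$ is a coordinate system near $x_0$. In these coordinates $\phi^*x^a=x^a$ and $\phi^*y^j=0$, which is exactly the claimed normal form $(x,y)\mapsto(x,0)$, with both sides expressed in the same chart.

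From the normal form I read off the factorization. Set $M=\{y=0\}$, a closed embedded subsupermanifold near $x_0$, and let $\iota$ be its inclusion. Let $\rho(x,y)=x$, a submersion. Then $\phi=\iota\circ\rho$ and $\rho\circ\iota=\mathrm{id}_M$. The local pieces agree on overlaps because $M$ is intrinsically the image of $\phi$: its ideal is generated by $\{f-\phi^*f\}$, which is independent of choices. Hence they glue to a global retract on the components where the image is nonempty.

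The step I expect to be the main obstacle is the choice of odd coordinates via $\phi^*$ and $\mathrm{id}-\phi^*$. One has to check that the differentials at $x_0$, taken in the super sense including odd directions through the cotangent space $\mathfrak m/\mathfrak m^2$, are the ones computed above. This amounts to functoriality of the induced map on $\mathfrak m_{x_0}/\mathfrak m_{x_0}^2$ under $\phi^*$, and it is also where locally constant rank enters, ensuring the $x$-coordinates continue to work as $x_0$ varies.
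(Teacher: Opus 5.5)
Your construction is correct, and it takes a genuinely different route from the paper. The paper gets locally constant rank from the preceding Proposition (an idempotent $(\mathsf T\phi)^*$ on $\Omega^1_{\MM}$ with locally free image). It then simply invokes the super constant rank theorem, leaving implicit how the two charts supplied by that theorem are made to coincide.

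You bypass both steps. At a fixed point $x_0$ you build a single chart $\tilde x^a=\phi^*x^a$, $y^j=u^j-\phi^*u^j$, for which $\phi^*\tilde x^a=\tilde x^a$ and $\phi^*y^j=0$. The domains match because $|\phi|^{-1}(|\phi|^{-1}(V))=|\phi|^{-1}(V)$. The super inverse function theorem then turns this directly into the normal form in one and the same chart. The intrinsic ideal generated by the $f-\phi^*f$ gives $M$ and a globally defined retraction $\rho$ with $\phi=\iota\circ\rho$ and $\rho\circ\iota=\mathrm{id}_M$.

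What this buys you is that you never have to verify the constant-rank hypothesis in the super sense. That hypothesis is a sheaf-level condition on the Jacobian, not merely a condition on $\mathsf T_q\phi$ at body points. Constant rank near $|M|$ comes out as a \emph{consequence} of your normal form.

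For the same reason, your last paragraph misplaces the role of constant rank. Each chart is produced independently by the inverse function theorem at each fixed point. The only thing to check is functoriality on $\mathfrak m_{x_0}/\mathfrak m_{x_0}^2$, which holds because $|\phi|(x_0)=x_0$ makes $\phi^*$ a local endomorphism of the stalk at $x_0$.

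Do not rely on the Proposition for the first sentence of the statement, though. Constant rank holds only on a neighbourhood of the fixed-point set, which is exactly what your construction delivers. Elsewhere it can fail, already for ordinary manifolds. For instance, $\phi(x,y)=(x(1-y^2),0)$ on $\R^2$ satisfies $\phi\circ\phi=\phi$. Its Jacobian has rank $0$ at $(0,1)$ but rank $1$ at $(\epsilon,1)$ for $\epsilon\neq 0$. Correspondingly, the global retraction $\rho$ is a submersion only near $|M|$.

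This example also shows that the Proposition is false as stated. Its argument breaks because $(\mathsf T\phi)^*$ is only $\phi^*$-semilinear, not an $\mathcal O_{\MM}$-linear endomorphism of $\Omega^1_{\MM}$. So ``locally'' in the corollary must be read as ``near points of $|M|$''. With that reading your argument is complete and self-contained, and it does not need the Proposition at all.
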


\mn The following theorem is a super-analog of the main result in \cite{Grabowski:2012}, proved there for the standard (purely even) manifolds. The super-version already appeared in \cite{Jozwikowski:2016}, and the proof there makes substantial use of \cite{Grabowski:2012}. To keep our paper self-contained, we present here a full proof working for supermanifolds.

\begin{theorem}\label{theorem:homogeneous_trivialization}
Let $(\MM,h)$ be an $h$-manifold. Then $h_0:\MM\to\MM$ is a smooth retract onto a submanifold $M$ of $\MM$.
Moreover,
$$h_0:\MM\to M$$
is a fiber bundle with the typical fiber $F=\R^{\za|\zb}$, for some $\za,\zb\in\NN$, and for every $x_0\in|M|$ there exists a local trivialization
$$h_0^{-1}(U)\simeq U\ti\R^{\za|\zb},$$
with coordinates $(x^1,\dots,x^m;u^1,\dots,u^n)$ on $U\ti\R^{\za|\zb}$, where $(x^a)$ are local coordinates on $M$  centered at $x_0$, and $(u^i)$ are the standard global coordinates on $\R^{\za|\zb}$, such that
\be\label{hoco} (x^1,\dots,x^m,u^1,\dots,u^n)\circ h_t=(x^1,\dots,x^m,t^{w_1}u^1,\dots,t^{w_n}u^n),
\ee
for some positive integers $w_i$ and all $t\in\R$. If, additionally, the diffeomorphism $h_{-1}$ acts as the parity operator, then $M$ is purely even, and the weight determines the parity: the coordinate $u^i$ is even (resp., odd) if $w_i$ is even (resp., odd).
\end{theorem}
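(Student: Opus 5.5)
Since $h_0\circ h_0=h_{0}$ by \eqref{hs}, $h_0$ is a smooth idempotent endomorphism of $\MM$. Corollary~\ref{c1} then shows that its image $M$ is a submanifold and a smooth retract. It also gives local coordinates $(x^a,y^j)$ around any $x_0\in|M|$ in which $h_0$ is the projection $(x,y)\mapsto(x,0)$. For every $t$ we have $h_0\circ h_t=h_t\circ h_0=h_0$, so $h_t$ preserves the fibres of $h_0$ and fixes $M$ pointwise. Consequently $x^a\circ h_t=x^a$ whenever the $x^a$ are pulled back from $M$ by $h_0$, i.e., $\n(x^a)=0$.

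The main work is to straighten the fibre coordinates into homogeneous ones. I would use the smooth dependence on $t$ of $g_j(t)\coloneqq y^j\circ h_t$, together with the identities $g_j(0)=0$ and $g_j(ts)=g_j(t)\circ h_s$.

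\emph{Polynomiality.} A Hadamard-type expansion in $t$ at $t=0$ gives
$$y^j\circ h_t=\sum_{k=1}^{N}t^k\,\phi^j_k+t^{N+1}R^j_N(t),$$
where $\phi^j_k=\tfrac1{k!}\partial_t^k|_{t=0}(y^j\circ h_t)$. Differentiating $h_t\circ h_s=h_{ts}$ in $t$ at $0$ shows $\phi^j_k\circ h_s=s^k\phi^j_k$, so each $\phi^j_k$ is homogeneous of weight $k$. In particular $\phi^j_k$ lies in $\mathcal J^k$, where $\mathcal J$ is the ideal generated by the fibre coordinates $y$. (A weight-$k$ function restricted along a fibre ray behaves like $t^k$, which forces vanishing to order $k$ along $M$.)

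\emph{Linearised action.} Next consider the linear part $A(t)$ of $h_t$ on $\mathcal J/\mathcal J^2$. This is the normal bundle of $M$, a vector superbundle over $M$. We have $A(ts)=A(t)A(s)$ and $A(0)=0$, so $A$ is a smooth representation of the monoid $(\R,\cdot)$ on each fibre. Smooth multiplicative monoid morphisms $\R\to \mathrm{End}(V)$ are, after conjugation, diagonal with entries $t^{w}$, $w\in\NN$. (A proof: $A$ restricted to $t>0$ is $e^{(\log t) L}$; continuity at $0$ and smoothness force $L$ diagonalisable with non-negative integer eigenvalues, and the eigenvalue $0$ cannot occur, since $A(0)=0$.) Hence the normal bundle splits into weight subbundles with weights $w\ge1$. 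Choose a local frame adapted to this splitting and lift its elements to functions $y^i$ whose linear parts have pure weight $w_i$. Then set
$$u^i\coloneqq \frac1{w_i!}\,\partial_t^{w_i}\big|_{t=0}\,(y^i\circ h_t)=\phi^i_{w_i}.$$
By the previous paragraph $u^i$ is exactly homogeneous of weight $w_i$. Its class in $\mathcal J/\mathcal J^2$ coincides with that of $y^i$, because the lower-order coefficients $\phi^i_k$, $k<w_i$, have vanishing linear part (the linear part has pure weight $w_i$). By the inverse function theorem for supermanifolds, $(x^a,u^i)$ is therefore a coordinate system near $x_0$, and it satisfies \eqref{hoco}. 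The domain can be enlarged to the whole $h_0^{-1}(U)$ by rescaling with $h_t$: every point is of the form $h_t$ of a point close to $M$, and the homogeneity of the $u^i$ allows the chart to be extended. This yields $h_0^{-1}(U)\simeq U\ti\R^{\za|\zb}$. I expect this globalization along the fibres, together with justifying the Taylor argument in the super setting (odd coordinates $y$, nilpotent parts), to be the main technical obstacle. I would handle both by working in the concrete (Rogers) picture, where $h_t$ is given by honest smooth functions of $t$.

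\emph{Parity statement.} Finally, assume $h_{-1}$ is the parity operator. Then for each coordinate $(-1)^{w_i}u^i=u^i\circ h_{-1}=(-1)^{|u^i|}u^i$, so $w_i\equiv|u^i|\pmod 2$. The coordinates $x^a$ are fixed by $h_{-1}$, hence even, so $M$ is purely even.
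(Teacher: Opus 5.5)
Your proposal is correct and follows the paper's proof essentially step by step: the retraction and adapted coordinates from Corollary~\ref{c1}, the weight splitting of the linearized action of $h_t$ at points of $M$, the homogeneous coordinates $u^i$ taken as the $w_i$-th Taylor coefficients of $y^i\circ h_t$ (with $\xd u^i=\xd y^i$ along $M$, so the inverse function theorem applies), the extension to all of $h_0^{-1}(U)$ by rescaling with $h_{1/k}$, and the parity argument via $h_{-1}$. The only variation is that you derive $A(t)=\sum_w t^wP_w$ from $A(t)=e^{(\log t)L}$ and smoothness at $t=0$, whereas the paper takes the Taylor projections $P_k$ and shows the flat remainder vanishes.

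You should, however, delete the aside that $\phi^j_k\in\mathcal J^k$, because it is false: weight measures scaling under $h_t$, not order of vanishing along $M$. For example, on $\R$ with $h_t(y)=t^2y$, the coordinate $y$ has weight $2$ but lies in $\mathcal J\setminus\mathcal J^2$. If the aside were true, it would contradict your later, correct, claim $[u^i]=[y^i]\neq0$ in $\mathcal J/\mathcal J^2$ for $w_i\ge 2$. Fortunately, nothing in your argument uses it.
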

\begin{proof}
As $h_0$ is a smooth idempotent endomorphism of $\MM$, we get from Corollary \ref{c1} that the image of $h_0$ is a smooth retract $M$ of $\MM$, and for every $x_0\in|M|$ there exist local coordinates $(x^1,\dots,x^m,y^1,\dots,y^n)$ on $\mathcal M$ centered at $x_0$, such that $\phi$ is locally isomorphic to the projection
$$(x^1,\dots,x^m,y^1,\dots,y^n)\circ h_0=(x^1,\dots,x^m,0,\dots,0).$$
Hence, we can view $(x^a)$ as local coordinates on $M$, and $(y^i)$ as local fiber coordinates in the typical local fiber $V$. Since $h_t\circ h_0=h_0=h_0\circ h_t$, the coordinates $(x^a)$ and the local fibers are respected by all $h_t$, $t\in \R$. Moreover, for $q\in |M|$, the endomorphism $H_t(q)=\bl\sT_qh_t\br^*$ acts on the vector superspace $\sT^*_{q}\MM$, and $h_t\circ h_s=h_{ts}$ implies
\be\label{Hs}H_t\circ H_s=H_{ts},\ee
for all $s,t\in\R$. Consider the Taylor expansion $A_t=\sum_{k=0}^\infty t^kP_k$ of $H_t$ at $t=0$.
From (\ref{Hs}), we get $A_tA_s=A_{ts}$, so
$$\sum_{k,l}t^ks^lP_kP_l=\sum_kt^ks^kP_k,$$
and $P_kP_l=\zd^k_lP_k$, where $\zd^k_l$ is the Kronecker symbol. In other words, $\bl P_k(q)\br$ is a decomposition of the identity on $\sT^*_q\MM$ into linear projections,
$$P_k(q):\sT^*_q\MM\to E_k(q)\subset\sT^*_q\MM,$$
and $E_k(q)\cap E_l(q)=\{0\}$ for $k\ne l$. Since $P_k(q)$ smoothly depends on $q\in M$, the dimension $d_k(q)$ of $E_k(q)$ is locally constant. But the number of trivially intersecting nontrivial vector subspaces of a finite-dimensional vector superspace is not bigger than its dimension, so there exists the smallest $r\in\NN$ such that
$$A_t=\sum_{k=0}^r t^kP_k,$$
for $t\in\R$. In particular, $d_r>0$.

\mn Since the polynomial $A_t$ is the Taylor expansion of $H_t$, we have $H_t=A_t+B_t$, where the smooth curve $\R\ni t\to B_t(q)\in\End(\sT^*_q\MM)$ is flat at $t=0$, i.e., for any $n=1,2,\dots$ there exists a $t_n>0$ such that
\be\label{flat} \nm{B_t}<t^n\quad\text{for}\quad 0<t<t_n,\ee
where $\nm{\cdot}$ is the operator norm in $\End(\sT^*_q\MM)$, associated with a scalar product. It is easy to see that if $B_t$ is not trivially 0 for $t$ close to 0, then we can choose a sequence $t_n\to 0$ such that $t_n$ is the smallest positive number with property (\ref{flat}), so $\nm{B_{t_n}}=t^n_n$. Hence,
$$t_n^n=\nm{B_{t_n}}=\nm{B_2B_{t_n/2}}\le\nm{B_2}\cdot\nm{B_{t_n/2}}<\nm{B_2}\cdot(t_n/2)^n,$$
and we get a contradiction when $n$ tends to infinity. Therefore,
$$
H_t=\sum_{k=0}^r t^kP_k,
$$
where $P_k$ is a projection on the vector subbundle $E_k$ of rank $d_k$. But $H_t$ is an isomorphism for $t\ne 0$, so
\be\label{V}
\sT^*_q\MM=\bigoplus_{k=0}^rE_k(q),
\ee
and $d_k=\dim E_k(q)$ is locally constant. Of course, $E_0(q)\simeq\sT^*_qM$.

\mn We can assume now that the fiber coordinates $(y^i)$ are chosen such that
\be\label{ccoorr}\xd y^i(q)\in E_{w_i}(q)\ee
for $q$ in a neighbourhood of $x_0$ in $|M|$, and $w_i \in \{1, \ldots, r\}$. From
$$(y^i\circ h_t)\circ h_s=y^i\circ h_{ts},$$
we get that the functions $y^i_j$ in the Taylor expansions,
$$y^i\circ h_t=ty^i_1+\cdots+t^ry^i_r+o(t^r),
$$
are homogeneous of weight $j$, namely, $y^i_j\circ h_t=t^jy^i_j$, and that
(cf.~(\ref{ccoorr}))
$$\xd y^i_{w_i}(q)=\xd y^i(q).$$
The fact that $\bl\xd x^a(q),\xd y^i(q)\br$ span $\sT^*_q\MM$ means that $\bl x^a,z^i=y^i_{w_i}\br$ is a system of homogeneous coordinates in a neighbourhood of $x_0$ in $\MM$,
\be\label{hoko}x^a\circ h_t=x^a,\quad z^i\circ h_t=t^{w_i}z^i,\ee
for sufficiently small $t\in\R$.

\mn Since odd coordinates are global in their nature, let us concentrate now on even coordinates, which are only locally defined.
Consider a neighbourhood $W_0\subseteq M=|\MM|$ of $x_0\in M$ with even coordinates from (\ref{hoko}), say
$$\bl x^1,\dots,x^s;z^1,\dots, z^\za\br.$$
We can split it as the Cartesian product $W_0=U_0\ti V_0$ such that $(x^1,\dots,x^s)$ and $(z^1,\dots, z^\za)$ are coordinates on $U_0$ and on $V_0$, respectively.
As all $h_t$ are morphisms of supermanifolds, they respect the body $|\MM|$, and define on the  purely even manifold $|\MM|$ the reduced $h$-action $h'$. This action is trivial on $U_0$ and
$$\bl z^1,\dots, z^\za\br\circ h_t=\bl t^{w_1}z^1,\dots, t^{w_\za}z^\za\br,$$
for sufficiently small $t$. The open submanifold $F_0=(h'_0)^{-1}(U_0)$ in $|\MM|$ is invariant with respect to the $h'$-action, and for any $q\in F_0$ there exists a sufficiently large $k(q)\in\NN$ such that $h_{1/k(q)}(q)\in W_0$.
Let us define an $h$-action $h^0$ on $\R^\za$ with global coordinates $(u^1,\dots,u^\za)$ by
$$\bl u^1,\dots,u^\za\br\circ h^0_t=\bl t^{w_1}u^1,\dots,t^{w_\za}u^\za\br.$$
It is easy to see now that the map,
$$\Phi:F_0\to U_0\ti\R^\za,\quad q\mapsto \Bl t^{w_1k(q)}u^1\bl h_{1/k(q)}(q)\br,\dots,t^{w_\za k(q)}u^\za\bl h_{1/k(q)}(q)\br\Br,
$$
is a diffeomorphism intertwining the $h$-actions. Now, we add the homogeneous odd coordinates to the picture, (\ref{hoco}) follows, and the final statement is obvious.

\end{proof}
\begin{corollary}
The weight vector field in the homogeneous coordinates (\ref{hoco}) reads
\be\label{wvf2} \n=\sum_{i=1}^nw_i\cdot u^i\pa_{u^i},
\ee
thus it is even, complete, and its flow reads $\Exp(s\n)=h_{e^s}$.
\end{corollary}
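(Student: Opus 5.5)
The plan is to compute $\n$ directly from its definition (\ref{wvf}), using the homogeneous coordinates supplied by Theorem~\ref{theorem:homogeneous_trivialization}. Completeness and the formula for the flow will then follow from the monoid law (\ref{hs}).

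\emph{The coordinate expression.} A vector field on $h_0^{-1}(U)$ is determined by its values on the coordinate functions $x^a,u^i$. By (\ref{hoco}) we have $x^a\circ h_t=x^a$, so
\[
\n(x^a)=\frac{\pa}{\pa t}\Big|_{t=1}x^a=0 .
\]
Likewise $u^i\circ h_t=t^{w_i}u^i$, so
\[
\n(u^i)=\frac{\pa}{\pa t}\Big|_{t=1}t^{w_i}u^i=w_iu^i .
\]
Hence $\n=\sum_i w_iu^i\pa_{u^i}$, which is (\ref{wvf2}).

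\emph{Parity.} Each term $u^i\pa_{u^i}$ has parity $|u^i|+|u^i|=0$, so every term is even and $\n$ is even. More intrinsically, $\n$ is the derivative at $t=1$ of a family of even morphisms, and is therefore an even derivation.

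\emph{The flow.} I will show that $\Phi_s\coloneqq h_{e^s}$, defined for all $s\in\R$, is a global flow of $\n$. This gives completeness and the flow formula at once.
\begin{itemize}
\item From (\ref{hs}) we get $\Phi_s\circ\Phi_r=h_{e^se^r}=\Phi_{s+r}$ and $\Phi_0=h_1=\id_\MM$, so $(\Phi_s)$ is a one-parameter group of diffeomorphisms.
\item For any function $f$,
\[
\frac{\pa}{\pa s}(f\circ\Phi_s)=\frac{\pa}{\pa r}\Big|_{r=0}\bigl(f\circ h_{e^r}\circ h_{e^s}\bigr)=\n(f)\circ\Phi_s ,
\]
where the last equality uses the chain rule $\frac{d}{dr}e^r|_{r=0}=1$ to reduce to (\ref{wvf}).
\end{itemize}
Therefore $(\Phi_s)$ is the flow of $\n$, and $\Exp(s\n)=h_{e^s}$.

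As a coordinate check, this flow acts by $u^i\mapsto e^{w_is}u^i$ and fixes $x^a$, which is exactly the solution of the linear system defined by $\n$.

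\emph{Main obstacle.} The only delicate point is making the flow statement rigorous in the super setting. There, flows of even vector fields are defined via the unique solution of the ODE in the sense of morphisms. I would handle this by invoking uniqueness of integral flows of even vector fields on supermanifolds, and noting that $h_{e^s}$ is a globally defined smooth family satisfying the flow equation with initial value $\id_\MM$. The global definition of $h_{e^s}$ is what removes any need to argue locally about maximal existence intervals.
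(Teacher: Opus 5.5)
Your proposal is correct and follows the route the paper leaves implicit (it gives no separate proof): differentiate (\ref{hoco}) at $t=1$ to get the coordinate formula, observe that each $u^i\pa_{u^i}$ is even, and identify the flow $u^i\mapsto e^{w_is}u^i$ with $h_{e^s}$. Your intrinsic flow argument, using the monoid law (\ref{hs}) together with uniqueness of flows of even vector fields, is a slightly cleaner way to get completeness globally, since it needs no coordinates and no patching of chart-wise flows.
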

\no The biggest $w_i$ in (\ref{hoco}), we call the \emph{degree} of the $h$-action (or the degree of the $h$-manifold $\MM$).

\begin{proposition}
  Let $(\MM,h)$ be an $h$-manifold with homogeneous coordinates $(x^a, u^i)$ defined on a local trivialization $h_0^{-1}(U)$ as in Theorem~\ref{theorem:homogeneous_trivialization}. If $f$ is a (local) homogeneous function of weight $k\in \NN$ on $\MM$, then it is polynomial in the variables $u^i$, with coefficients being (local) smooth functions in the variables $x^a$.
\end{proposition}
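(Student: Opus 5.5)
Since $f$ is homogeneous of weight $k$ and the chart $h_0^{-1}(U)\simeq U\times\R^{\za|\zb}$ of Theorem~\ref{theorem:homogeneous_trivialization} is $h$-invariant, I may work directly in the homogeneous coordinates $(x^a,u^i)$ of \eqref{hoco}. The plan is to exploit $f\circ h_t=t^kf$ for all $t\in\R$, including $t$ near $0$, and compare it with a Taylor expansion in the fiber variables.

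First I deal with the odd coordinates. Splitting $u=(v,\xi)$ into even coordinates $v^1,\dots,v^\za$ and odd coordinates $\xi^1,\dots,\xi^\zb$, I write
$$f=\sum_{I}f_I(x,v)\,\xi^I,$$
where $I$ runs over subsets of $\{1,\dots,\zb\}$, $\xi^I$ is the ordered product, and the $f_I$ are ordinary smooth functions on $U\times\R^\za$. This expansion is finite, and the action \eqref{hoco} multiplies $\xi^I$ by $t^{w_I}$ with $w_I=\sum_{i\in I}w_i$. Uniqueness of the expansion then turns $f\circ h_t=t^kf$ into
$$f_I(x,t^{w}v)=t^{k-w_I}f_I(x,v)\qquad\text{for all } t\in\R,$$
where $t^wv=(t^{w_1}v^1,\dots,t^{w_\za}v^\za)$. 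If $w_I>k$, I take $t\to0$: the left side tends to $f_I(x,0)$ while $t^{k-w_I}$ blows up, so $f_I=0$. It therefore suffices to show that a smooth function $g$ on $U\times\R^\za$ with $g(x,t^wv)=t^mg(x,v)$ for all $t$ is a polynomial in $v$ whose coefficients are smooth in $x$.

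For this reduced, purely even claim I would Taylor-expand $g$ in $v$ at $v=0$ to order $N>m$, taking $N\ge m/\min_i w_i+1$:
$$g=\sum_{|\za'|<N}c_{\za'}(x)\,v^{\za'}+R_N,\qquad R_N=O(|v|^N),$$
where the $c_{\za'}$ are smooth in $x$ and the remainder bound is locally uniform in $x$. Substituting $t^wv$ and comparing powers of $t$ as $t\to0$ (apply $\partial_t^j|_{t=0}$ to both sides) shows that only monomials of weighted degree $\sum_i \za'_iw_i=m$ survive; this is a finite set, since all $w_i\ge1$. Let $P$ be the resulting weighted-homogeneous polynomial. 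Then $r=g-P$ satisfies $r(x,t^wv)=t^mr(x,v)$, and $r$ is $O(|v|^N)$ near $v=0$ because the discarded Taylor terms all vanish. Fixing $v$ and letting $t\to0$ gives
$$|t^mr(x,v)|=|r(x,t^wv)|\le C\,|t^wv|^N\le C'\,|t|^N$$
for small $t$, since $w_i\ge1$. As $N>m$, this forces $r(x,v)=0$. Hence $g=P$ on all of $U\times\R^\za$, which is where I use that the trivialization is global in the fiber.

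The main obstacle I expect is exactly this even step. One has to pass from the infinitesimal or Taylor information at $v=0$ to a global identity on the whole fiber. Homogeneity with respect to all $t$ is essential here; Euler's equation alone would not suffice, because of flat functions. The scaling argument above handles this, since $h_t$ with small $t$ pulls every point of the fiber into the region where the Taylor estimate holds. Reassembling the pieces gives $f=\sum_I P_I(x,v)\,\xi^I$ with each $P_I$ polynomial in $v$, so $f$ is polynomial in all $u^i$.
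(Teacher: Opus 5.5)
Your argument is correct, but it takes a different route from the paper's proof.

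\textbf{The paper's route.} The paper argues by induction on the weight using only Euler's equation $\liedv{\n}f=k f$. Since $[\n,\pa_{u^i}]=-w_i\pa_{u^i}$, each $\pa f/\pa u^i$ has weight $k-w_i<k$, so it is polynomial by the induction hypothesis. Then $f$ is recovered from its fiber derivatives, e.g.\ $f=\frac1k\sum_i w_iu^i\pa_{u^i}f$ for $k>0$. The base case (weight $0$) is immediate from $f=f\circ h_0$. This treats even and odd fiber coordinates uniformly and is very short. It does leave implicit that functions of negative weight, which arise when $w_i>k$, vanish.

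\textbf{Your route.} You split off the odd variables first. You then kill the components with $w_I>k$ by letting $t\to0$, which is exactly the negative-weight vanishing the paper leaves unstated. The even components you handle by a Taylor expansion at the zero section plus scaling. This is more explicit: it identifies the polynomial as the weighted-degree-$k$ part of the Taylor expansion and shows where globality of the fiber enters.

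\textbf{Shortening the even step.} For fixed $(x,v)$ you have $g(x,v)=\frac{1}{m!}\frac{d^m}{dt^m}\big|_{t=0}g(x,t^wv)$. By the chain rule the right-hand side equals $\sum_{w\cdot\za'=m}c_{\za'}(x)v^{\za'}$, so the remainder estimate is not needed. This also avoids a small imprecision. With a truncated expansion of order $N$, comparing coefficients of $t^j$ only for $j<N$ does not remove Taylor terms with $|\za'|<N$ but weighted degree $\ge N$. Those terms are harmless, since they are $O(|t|^N)$ after scaling, but your sentence claims more than that comparison gives.

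\textbf{The flat-function remark.} Your claim that Euler's equation alone would not suffice because of flat functions is not right here. The flow of $\n$ is $h_{e^s}$ and the fiber is global, so Euler's equation integrates to $g(x,t^wv)=t^mg(x,v)$ for all $t>0$. That is all your scaling argument uses, and smoothness at the zero section then forces the polynomial form. This is precisely why the paper's Euler-equation proof works.
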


\begin{proof}
  We shall show the result by induction on $k$. Clearly, $0$-homogeneous functions are independent of the variables $u^i$. Assume that $f$ has weight  $k>0$. Then, $\liedv{\n} f = k\cdot f$, and thus
  $$\liedv{\n} \liedv{\pa_{u^i}} f = \liedv{[\n, \pa_{u^i}]} f + k\cdot \liedv{\pa_{u^i}} f = (k-w^i) \frac{\partial f}{\partial u^i}\, ,\quad 1\leq i\leq n\, ,$$
  which means that $\frac{\partial f}{\partial u^i}$ has weight $k-w^i<k$. By the induction hypothesis, $\frac{\partial f}{\partial u^i}$ is a polynomial in the variables $u^i$, with coefficients being (local) smooth functions in the variables $x^a$. Because this holds for all partial derivatives with respect to the variables $u^i$, it also holds for $f$.
\end{proof}

\begin{corollary}
  Transition functions between systems of homogeneous bundle coordinates on an $h$-manifold are polynomials in the homogeneous fiber coordinates. Moreover, all systems of homogeneous bundle coordinates on a connected component have the same weights up to permutations.
\end{corollary}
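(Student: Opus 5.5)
The plan is to combine the preceding Proposition (homogeneous functions are fiberwise polynomial) with Theorem~\ref{theorem:homogeneous_trivialization}, and then compare the two coordinate systems through their differentials along the body, using the decomposition $\sT^*_q\MM=\bigoplus_k E_k(q)$ from the proof of that theorem.

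\textbf{Polynomiality of transition functions.} Let $(x^a,u^i)$ and $(y^b,v^j)$ be two systems of homogeneous bundle coordinates on overlapping trivializations $h_0^{-1}(U)$ and $h_0^{-1}(U')$. On $h_0^{-1}(U\cap U')$, each new coordinate $v^j$ is a homogeneous function of weight $w'_j$. By the preceding Proposition, $v^j$ is polynomial in the $u^i$ with coefficients smooth in the $x^a$. The weight-zero coordinates $y^b$ are homogeneous of weight $0$, so they depend only on the $x^a$. Applying $\n=\sum_i w_i u^i\pa_{u^i}$ sharpens this: only monomials $u^{i_1}\cdots u^{i_p}$ with $w_{i_1}+\cdots+w_{i_p}=w'_j$ occur. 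In particular, the transition is a polynomial of bounded degree, and this part is immediate.

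\textbf{Equality of weights.} I would use an invariant description of the weights. For $q\in|M|$, the weights are encoded by the endomorphism $H_t(q)=(\sT_qh_t)^*$ of $\sT^*_q\MM$. In coordinates $(x^a,u^i)$, formula (\ref{hoco}) gives
$$
H_t\bigl(\xd x^a(q)\bigr)=\xd x^a(q),\qquad H_t\bigl(\xd u^i(q)\bigr)=t^{w_i}\,\xd u^i(q).
$$
Hence $E_k(q)=\operatorname{span}\{\xd u^i(q): w_i=k\}$, and the multiset of weights equals the multiset $\{k \text{ repeated } \dim E_k(q)\}_{k\ge1}$. This multiset is defined intrinsically by $h$, independently of the chosen coordinates. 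The same computation in the coordinates $(y^b,v^j)$ therefore gives the same multiset at every point $q$ of the overlap, counting even and odd dimensions of $E_k(q)$ separately. It follows that the two weight lists agree up to a permutation, parity-wise.

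\textbf{Passing to a connected component.} The proof of Theorem~\ref{theorem:homogeneous_trivialization} shows that $d_k(q)=\dim E_k(q)$ is locally constant on $|M|$. Since $h_0^{-1}(U)$ retracts onto $U$, connected components of $\MM$ correspond to connected components of $|M|$. Along a connected component, the superdimensions $\dim E_k$ are constant, so any two homogeneous coordinate systems defined there, even on disjoint domains, have the same weights up to permutation.

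The main obstacle is the one point needing care: making sure the identification of $E_k(q)$ via $\xd u^i(q)$ is legitimate at body points $q$ of each chart. This holds because the $\xd x^a(q)$ and $\xd u^i(q)$ form a basis of $\sT^*_q\MM$, and the decomposition $\bigoplus_k E_k(q)$ is unique.
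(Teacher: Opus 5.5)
Your proof is correct. The paper gives no separate argument and presents the statement as an immediate consequence of the preceding Proposition. Your polynomiality part is exactly that deduction, and the weight count via $\n$ is a natural sharpening.

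For the equality of weights you take a coordinate-free route. You identify the weight multiset with the superdimensions of the intrinsic summands $E_k(q)$ of $\T^*_q\MM$ from the proof of Theorem~\ref{theorem:homogeneous_trivialization}. The route suggested by the corollary's placement would instead stay with the transition maps. By your own weight count, $y^b=y^b(x)$ and $v^j=\sum_{w_i=w'_j}c^j_i(x)\,u^i+(\text{terms of degree}\geq 2\text{ in the }u^i)$. So at body points $q$ of the zero section, the Jacobian of the transition is block-diagonal by weight and parity. Its invertibility forces equal numbers of coordinates of each weight and parity on overlaps, and one then chains overlapping charts across a connected component.

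This is the same linear algebra as yours: the blocks are the change-of-basis matrices between your two bases of each $E_k(q)$. Your version has the advantage of comparing charts with disjoint domains directly.

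Two small remarks. First, you do not need the theorem's proof for local constancy of $\dim E_k$. Your computation already shows that $\dim E_k(q)$ equals the number of weight-$k$ coordinates at every body point of a single chart's base, which gives local constancy. Second, the correspondence between components of $|\MM|$ and of $|M|$ is most cleanly justified by the path $t\mapsto h_t(p)$, $t\in[0,1]$, which joins each $p$ to $h_0(p)$.
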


\section[The homogeneity approach]{The homogeneity approach to $\NN$-graded manifolds}\label{sec:N_manifolds}
Let $\MM =(M, \CM)$ be an algebro-geometric $\NN$-graded manifold with a chart \eqref{eq:sheaf_chart}.
Note that $\MM$ is naturally equipped with a weight vector field $\n$ locally given by
$$\n = \sum_{j=1}^{n} \sum_{\beta_j=1}^{m_j} j\, e_j^{\beta_j}\, \pa_{e_j^{\beta_j}}\, ,$$
which corresponds to declaring $(x^\alpha)$ to have weight zero, and $e_j^{\beta_j}$ to have as weight the degree of $V_j$ in the graded vector space $V$. The flow of $\n$ defines an action of the monoid $(\RR, \cdot)$ on $\MM$ given by
\begin{equation}\label{eq:homog_structure_from_sheaf}
  h_t\left(x^\alpha, e_i^{\beta_i}\right) = \left(x^\alpha, t^i e_i^{\beta_i}\right)\, .
\end{equation}
In particular, $h_0$ can be identified with the projection onto the body $M$. Additionally, note that $h_{-1}$ acts as the parity operator.

\mn Motivated by the above observations, we propose the following definition.
\begin{definition}
An \emph{N-manifold of degree $n$} is an $h$-manifold $\MM$ such that the $h$-action $h_t$ has degree $n$, and $h_{-1}$ acts as the parity operator. In particular, the base manifold $M=h_0(\MM)$ is purely even. N-manifolds of degree $n$ form the category $\mathsf{NMan}^n$, a full subcategory of the category $\mathsf{hMan}$ of $h$-manifolds.
\end{definition}
\begin{remark} In our approach, the question of `respecting weights' is not fundamental. Morphisms of $h$-manifolds respect weights by definition. Note, however, that the 0-function is of any weight, so we accept morphisms which kill some of the homogeneous functions; therefore, there are morphisms into $h$-manifolds of bigger degree, as well as morphisms into $h$-manifolds of smaller degree.
Note also that our definition of N-manifolds was already suggested by {\v{S}}evera \cite{Severa:2005}, however, without any proofs or details.
It is also worth mentioning that the equivalence between Roytenberg's and  {\v{S}}evera's definitions is an immediate consequence of Theorem~\ref{theorem:homogeneous_trivialization}.
\end{remark}

\begin{theorem}
  The categories $\mathsf{Man}_{\mathsf{alg}}^n$ and $\mathsf{NMan}^n$ are equivalent. Moreover, on each $\NN$-graded manifold $\MM$, the degrees of homogeneous functions defined by the homogeneity action $h$ and by the grading of the sheaf $\CM$ coincide.
\end{theorem}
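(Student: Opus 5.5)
I will construct two functors, $F\colon \mathsf{Man}_{\mathsf{alg}}^n\to\mathsf{NMan}^n$ and $G\colon \mathsf{NMan}^n\to\mathsf{Man}_{\mathsf{alg}}^n$, and show that both composites are naturally isomorphic to the identities.

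\emph{The functor $F$.} Given $\MM=(M,\CM)$, forget the $\NN$-grading but keep the induced $\ZZ_2$-grading. Via \eqref{eq:sheaf_local_model_wedge}, this gives a supermanifold whose structure sheaf is locally $\Cinfty_U\otimes\bigwedge^\bullet W$. The action $h_t$ is defined on homogeneous elements $f\in\CM^l(U)$ by $h_t^\ast f=t^l f$ and extended additively. Since the grading is intrinsic to the sheaf, this definition is chart-free. It is multiplicative because the sheaf is graded, so $h_t^\ast$ is an algebra morphism and $h_t\circ h_s=h_{ts}$. In a chart it takes the form \eqref{eq:homog_structure_from_sheaf}, which shows smoothness in $t$. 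The remaining conditions also hold: $h_0$ is the projection to the body, $h_{-1}$ is the parity operator because the parity of $e_i$ is $i \bmod 2$, and the degree equals $n$ (assuming $m_n>0$, as implicit in the degree convention). A degree-preserving morphism $\psi^\sharp$ commutes with $h_t^\ast$ on homogeneous functions, hence on all functions, so $F$ is defined on morphisms.

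\emph{The functor $G$.} Given an N-manifold $(\MM,h)$, set $M=h_0(\MM)$, which is purely even by Theorem~\ref{theorem:homogeneous_trivialization}. For an open $U\subseteq M$, let $\CM^l(U)$ be the space of functions on $h_0^{-1}(U)$ of weight $l$, and define $\CM=\bigoplus_l\CM^l$. In the homogeneous coordinates \eqref{hoco}, the Proposition on homogeneous functions shows that elements of $\CM^l$ are polynomials in the $u^i$ with coefficients in $\Cinfty(U)$. Hence $\CM|_U\cong\Cinfty_U\otimes \operatorname{S}^\bullet V$, where $V_k$ is spanned by the $u^i$ with $w_i=k$. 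Since $h_{-1}$ is the parity operator, the parity of $u^i$ agrees with $w_i$, so graded-symmetric in the $\NN$ sense coincides with supercommutative, and the local model is exactly \eqref{eq:sheaf_local_model}. An $h$-morphism pulls back weight-$l$ functions to weight-$l$ functions, which defines $G$ on morphisms.

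\emph{The composites and the degree statement.} On objects, $G\circ F\simeq\mathrm{id}$ is immediate in charts: the weight-$l$ functions for \eqref{eq:homog_structure_from_sheaf} are precisely $\Cinfty_U\otimes \operatorname{S}^l V$. This also proves the second assertion, that the two notions of degree coincide. For $F\circ G\simeq\mathrm{id}$ I must recover the full structure sheaf of $\MM$ from the polynomial sheaf $\bigoplus_l\CM^l$. I expect this to be the main obstacle, because the structure sheaf of $h_0^{-1}(U)\simeq U\times\RR^{\za|\zb}$ contains non-polynomial smooth functions of the even $u^i$.

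The resolution I would try: a supermanifold morphism is determined by the pullbacks of coordinates. The coordinates $x^a,u^i$ are all homogeneous, and by the Corollary on transition functions their transitions are polynomial. So both $\MM$ and $F(G(\MM))$ are glued from the same local models $U\times\RR^{\za|\zb}$, by the same polynomial transition maps, and they are canonically isomorphic. For morphisms, an $h$-morphism $\Phi$ is determined by $\Phi^\ast x^a$ and $\Phi^\ast u^i$, which are homogeneous of the corresponding weights. Conversely, any degree-preserving sheaf morphism assigns such homogeneous images to the coordinates and therefore integrates to a unique supermanifold morphism intertwining $h_t$. This gives full faithfulness, and $G$ is essentially surjective by the object-level argument above.
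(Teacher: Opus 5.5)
Your overall route is the paper's. Homogeneous bundle coordinates serve as graded generators, weight equals degree, $h_{-1}$ fixes parity, and morphisms preserve degree iff they intertwine the actions. Your functor $G$ (weight-$l$ functions on $h_0^{-1}(U)$, polynomial by the Proposition on homogeneous functions) is a correct and more explicit version of the paper's converse direction.

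\textbf{The gap is in $F$.} Forgetting the $\NN$-grading of $(M,\CM)$ does not give a supermanifold as soon as some $V_{2j}\neq 0$ (for degree $1$ your $F$ is fine):
\begin{itemize}
\item Generators of even positive degree are even but not nilpotent. Modulo nilpotents, $\Cinfty_U\otimes\operatorname{S}^\bullet V$ is the polynomial ring over $\Cinfty_U$ in those generators, not the algebra of smooth functions on a domain.
\item The N-manifold you want has as body the total space of a bundle over $M$ with fibre $\RR^{\alpha}$, where $\alpha=\sum_j m_{2j}$. Its structure sheaf contains non-polynomial functions such as $\exp p$ or $(1+p^2)^{-1}$ for a degree-$2$ coordinate $p$.
\item Correspondingly, the rule $h_t^\ast f=t^lf$ only defines $h_t$ on the polynomial subsheaf.
\end{itemize}
This is why your last paragraph contradicts your own definition. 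With $F$ as stated, $F(G(\MM))$ is a ringed space over $M$ and cannot be isomorphic to $\MM$, whose body is the total space of $h_0\colon|\MM|\to M$. The paper's proof passes over the same point: it identifies $\CM$ directly with the structure sheaf of $\MM$.

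\textbf{The fix} is to make your final paragraph the definition of $F$.
\begin{itemize}
\item Take charts $\CM|_{U_k}\cong\Cinfty_{U_k}\otimes\operatorname{S}^\bullet V$. The transition automorphisms preserve degree. So they send base coordinates to smooth functions of the base coordinates alone, since the degree-$0$ part is just $\Cinfty$. They send each $e_i^{\beta_i}$ to a polynomial of degree $i$ in the $e$'s with smooth coefficients.
\item These formulas make sense on all of $(U_k\cap U_l)\times\RR^{\alpha|\beta}$. They define superdiffeomorphisms there, because the inverse transition has the same form, and they inherit the cocycle condition. So they glue to a supermanifold fibred over $M$.
\item Define $h_t$ chartwise by \eqref{eq:homog_structure_from_sheaf}. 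It is compatible with the transitions precisely because a degree-$i$ polynomial satisfies $P\bigl(x,(t^je_j^{\beta_j})\bigr)=t^iP\bigl(x,(e_j^{\beta_j})\bigr)$.
\end{itemize}
The remaining steps then go through:
\begin{itemize}
\item Degree-preserving sheaf morphisms integrate to $h$-equivariant supermorphisms through coordinate pullbacks, as you say.
\item $G\circ F\simeq\mathrm{id}$ follows by applying the Proposition on homogeneous functions to the glued manifold. This also gives the coincidence of the two notions of degree.
\item $F\circ G\simeq\mathrm{id}$ is your gluing argument.
\end{itemize}
With $F$ defined this way, the rest of your proposal goes through.
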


\begin{proof}
  As we have already seen, given an algebro-geometric $\NN$-graded manifold, one can construct the homogeneity action \eqref{eq:homog_structure_from_sheaf} such that $h_0$ is the projection onto the body and $h_{-1}$ is the parity operator.

  Conversely, given a homogeneous $\NN$-graded supermanifold $(\MM, h)$, one can define an associated algebro-geometric $\NN$-graded manifold as follows.
  One can always find an atlas on $\MM$ with homogeneous coordinates $(z^a)$ on $\MM$ with non-negative integer degrees $(w_a)$. In the algebro-geometric characterization, $\MM = M$ is an ordinary manifold $M$ with a sheaf $\CM$ locally isomorphic to $\Cinfty(U)\otimes \bigwedge\nolimits^\bullet(V)$ for some vector space $V$.

  \mn We can rewrite the coordinates as $(z^a)=(x^\alpha, y^\mu)$, where $x^\alpha$ and $y^\mu$ have zero and non-zero weights, respectively; and express their set of weights as $\Gamma=\{0, w_1, \ldots, w_k\}$, with $0<w_1<\ldots <w_k \eqqcolon n$, and $k,n$ being non-negative integers.
  The assumption that $h_0$ is the projection onto the body means that $(x^\alpha)$ are precisely the coordinates on the body. On the other hand, the coordinates $(y^\mu)$ define a basis of $V$.

  \mn Let $V_i$ be the subspace of $V$ generated by coordinates $(y^\mu)$ with weight $w_i$, or $V_i=\{0\}$ if there are no coordinates of that weight. This makes  $V$ into a graded vector space:
  $$V= \bigoplus_{i=1}^n V_i\, .$$
  The assumption that $h_{-1}$ acts as the parity operator implies that the subspace $V_i$ has the $\ZZ_2$-degree
  $$w_i\ (\mod 2) = i\ (\mod 2) \, .$$
  In plain words, $V_i$ is even (resp.~odd) if $i$ is even (resp.~odd).

  \mn By construction, it is clear that a morphism of supermanifolds $\Phi \colon \MM \to \NN$ preserves the $\NN$-grading of the sheaves if and only if it intertwines the corresponding homogeneity actions.

\end{proof}

\section{Homogeneous distributions and the Frobenius theorem}\label{sec:Frobenius}
Let $(\MM, h)$ be an $h$-manifold with weight vector field $\n$.
A \emph{homogeneous distribution} is a vector subbundle $D\subseteq \T \MM$ such that
$$\T h_t (D) \subseteq D\, , \quad \forall\, t\in\RR\, ,$$
or, equivalently, such that
$$ \liedv{\n} X = [\n,X]\in D$$
for any vector field $X$ from $D$, if we identify $D$ with the corresponding locally free module of vector fields.

 A vector field $X$ is homogeneous of weight $w$ if and only if
$$ \liedv{\n} X = [\n,X]=w \cdot X.$$
In particular, for homogeneous bundle coordinates (\ref{hoco}), $\pa_{x^a}$ has weight 0, and $\pa_{u^i}$ has weight $-w_i$.

\begin{theorem}
  Let $(\MM, h)$ be an $h$-supermanifold.
  A distribution $D\subseteq \T \MM$ of rank $k$ is homogeneous if and only if, for every $x_0 \in |M|$, there exists a local trivialization $h_0^{-1}(U)\simeq U \times \RR^{\alpha |\beta}$ of the fiber bundle $h_0:\MM\to M$,
  and $k$ homogeneous vector fields $X_1, \ldots, X_r, Y_1, \ldots, Y_{k-r}$ which generate $D$ on $h_0^{-1}(U)$, where $X_i$ have zero weight. Moreover, $D_0 = \T h_0(D)\subseteq \T M$ is a distribution on $M = h_0(\MM)$ generated by $X_1, \ldots, X_r$ on $U$.
\end{theorem}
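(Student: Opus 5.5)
Fix homogeneous bundle coordinates $(x^a,u^i)$ on $h_0^{-1}(U)$ as in Theorem~\ref{theorem:homogeneous_trivialization}, so that $\n=\sum_i w_iu^i\pa_{u^i}$. The plan is to prove each direction separately and then read off the statement about $D_0$.

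The converse direction is immediate. If $D$ is locally generated by homogeneous vector fields $Z_j$ of weights $\omega_j$, then for $X=\sum_j f^jZ_j$ we have
$$[\n,X]=\sum_j\bl\n(f^j)+\omega_jf^j\br Z_j\in D,$$
so $D$ is homogeneous.

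For the direct implication we use the decomposition of $\Gamma(D)$ into weight components. Let $D$ be homogeneous, so $\liedv{\n}$ preserves the module $\Gamma(D)$ over $h_0^{-1}(U)$. Choose any local frame $Z_1,\dots,Z_k$ of $D$ near $x_0$ and write it in the coordinate basis $\pa_{x^a},\pa_{u^i}$.
\begin{itemize}
\item Step 1: Taylor-expand the coefficients in $u$ along $M$, as in the proof of Theorem~\ref{theorem:homogeneous_trivialization}. Equivalently, consider $\T h_t$ acting on $D$ and let $t\to 0$.
\item Step 2: Show that the homogeneous components $Z^{(w)}$ of each $Z_j$, with $w\geq -\max w_i$, again lie in $D$. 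The components are obtained by applying $\liedv{\n}$ repeatedly and using a Vandermonde argument on finitely many weights. For the infinite Taylor tail we instead use $(h_{t})_*$-invariance of $D$ and the flatness argument of Theorem~\ref{theorem:homogeneous_trivialization}.
\item Step 3: Pass to the zero section $M$. For $q\in|M|$, $h_0$ fixes $q$, so $\sT_qh_t$ preserves $D_q$. Hence $D_q$ splits as $\bigoplus_k D_q\cap\bigl(\text{weight }-k\text{ part}\bigr)$, with the weight $0$ part contained in $\sT_qM$. This is the same projector decomposition $P_k$ as before, now applied to $H_t=\sT_qh_t$ restricted to $D_q$.
\item Step 4: Choose homogeneous fields in $D$ whose values at $q$ form a basis adapted to this splitting. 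Such fields come from the lowest-order Taylor components of Step 2.
\item Step 5: By Nakayama / linear independence at $x_0$, these homogeneous fields generate $D$ near $x_0$. Shrinking $U$ is harmless for the base, but the generation must hold on the whole tube $h_0^{-1}(U)$. For this, use the homogeneity of $D$ and of the generators: their linear independence is invariant under $h_t$ for $t\neq0$, and every point of the tube flows into a neighbourhood of $U$ under $h_t$ as $t\to0$.
\end{itemize}

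Finally, $D_0=\T h_0(D)$. Fields of negative weight have $\pa_u$-type leading parts, so they project to zero under $\T h_0$. Weight-zero fields have the form $X=\sum_a f^a(x)\pa_{x^a}+\sum_{w_i=w_j} g^i_j(x)\,u^j\pa_{u^i}$, so they project to $\sum_a f^a\pa_{x^a}$. Hence $D_0$ is spanned by the images of $X_1,\dots,X_r$. Checking that this is a distribution of constant rank means checking the $X_i$ stay linearly independent after projection. That may require choosing the $X_i$ so that $X_i(q)\in \sT_qM$ spans $D_q\cap\sT_qM$, and discarding weight-zero fields that are purely vertical (of the form $u\pa_u$), since these vanish on $M$.

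The main obstacle is Step 2/5: extracting homogeneous elements of $D$ from non-homogeneous ones without a polynomiality result for vector fields, and getting generation on the whole tube. I would first try to reduce to polynomial objects: $\liedv{\n}$-stable finitely generated modules should be spanned by eigenvectors, by the finite-dimensionality of the weight spectrum at points of $M$ (Step 3), and this should be extended off $M$ by $h_t$-equivariance.
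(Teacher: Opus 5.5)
Your Step 3 is the right starting point, and it is also where the paper starts: every $q\in|M|$ is fixed by all $h_t$, so $D_q$ is a graded subspace of $\mathsf{T}_q\MM$. The genuine gap is Step 2, on which Steps 4 and 5 depend. You never actually produce homogeneous sections of $D$, and the mechanisms you propose do not work.
\begin{itemize}
\item Along the even fibre coordinates, the Taylor expansion of a frame field $Z$ has components of infinitely many weights, and $Z$ is not the sum of its Taylor series.
\item Applying $\liedv{\nabla}$ finitely often and inverting a Vandermonde matrix isolates $Z^{(w)}$ only up to $\liedv{\nabla}$-iterates of the truncation remainder $R_N$. Nothing tells you that $R_N\in\Gamma(D)$.
\item The flatness argument of Theorem~\ref{theorem:homogeneous_trivialization} does not handle $R_N$ either. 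It disposed of a flat remainder using the semigroup law $H_tH_s=H_{ts}$ of $\mathsf{T}_qh_t$, whereas $R_N$ is neither flat nor multiplicative.
\item Your fallback, that $\liedv{\nabla}$-stable finitely generated modules are spanned by eigenvectors, is essentially the theorem itself.
\end{itemize}

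The paper needs no decomposition of sections. Using Step 3, pick homogeneous coordinates in which $D_{x_0}$ is spanned by $\partial_{x^1},\dots,\partial_{x^r},\partial_{u^1},\dots,\partial_{u^{k-r}}$. Then row-reduce a local frame so that its components along these pivot directions form the identity matrix, as in \eqref{eq:generators_distribution}. Since $[\nabla,\partial_{x^a}]=0$ and $[\nabla,\partial_{u^i}]=-w_i\partial_{u^i}$, the fields $[\nabla,X_i]$ and $[\nabla,Y_a]+w_aY_a$ lie in $D$ and have vanishing pivot components. A section $\sum c^iX_i+\sum d^aY_a$ of $D$ has pivot components $(c^i,d^a)$, so both fields vanish. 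Hence $X_i$ has weight $0$ and $Y_a$ has weight $-w_a$.

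Alternatively, your Step 2 can be repaired directly. Take $Z$ on $U\times B$, with $B$ a ball in the even fibre coordinates, and let $n$ be the degree of the $h$-action. Then $t\mapsto t^{n}(h_t)^*Z$ is smooth on $|t|\le 1$ and lies in $D$ for $t\neq 0$. Applying a local projection onto $D$, all its $t$-derivatives at $t=0$ lie in $D$, and these are exactly $m!\,Z^{(m-n)}$. In Step 4 you then need the component whose weight equals that of the homogeneous vector $Z(q)$, not the lowest-order one.

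Your worry in Step 5 about generation on the whole tube $h_0^{-1}(U)$ is legitimate; the paper passes over it. It is settled as you indicate: homogeneous coefficients are polynomial in $u$, so the generators extend to the tube, and $h_t$-transport carries spanning from near $M$ to every point.

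Finally, weight-zero fields can also contain nonlinear vertical terms, such as $(u^1)^2\partial_{u^2}$ when $w_2=2w_1$. These are killed by $\mathsf{T}h_0$ as well, so your description of $D_0$ is unaffected.
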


\begin{proof}
  Obviously, linearly independent homogeneous vector fields generate a homogeneous distribution. Conversely, given a homogeneous distribution $D\subseteq \T \MM$, for each point $x_0 \in |M|$, $D_{x_0}$ is an $\NN$-graded vector subspace of $\T_{x_0}\MM$. By Theorem~\ref{theorem:homogeneous_trivialization}, there exists a system of homogeneous bundle coordinates $(x^1,\dots,x^m;u^1,\dots,u^n)$ on a local trivialization $h_0^{-1}(U)\simeq U \times \RR^{\alpha |\beta}$, where $U$ is a neighborhood of $x_0$ in $|M|$.  We may assume that $\restr{\pa_{x^1}}{x_0}, \ldots, \restr{\pa_{x^r}}{x_0}, \restr{\pa_{u^1}}{x_0}, \restr{\pa_{u^{k-r}}}{x_0}$ is a graded basis of $D_{x_0}$ (changes of bases preserving the grading have associated linear changes of homogeneous coordinates). Additionally, shrinking $U$ if necessary, we may assume that $D$ is generated by  $X_1, \ldots, X_r, Y_1, \ldots, Y_{k-r}$ on $h_0^{-1}(U)$, with
  $\restr{X_i}{x_0} = \pa_{x^i}$ and $\restr{Y_a}{x_0} = \partial_{u^a}$ for $1\leq i \leq r$ and $1\leq a \leq k-r$. Because sections of $D$ form a $\Cinfty(\MM)$-module, we can additionally assume that
  \begin{equation}\label{eq:generators_distribution}
  \begin{aligned}
    & X_i = \pa_{x^i} + \sum_{j=r+1}^m f_i^j \pa_{x^j} + \sum_{b=k-r+1}^n g_i^b \pa_{u^b}\, , \quad 1\leq i\leq r\, ,\\
    & Y_a = \pa_{u^a} + \sum_{j=r+1}^m A_a^j \pa_{x^j} + \sum_{b=k-r+1}^n B_a^b \pa_{u^b}\, , \quad 1\leq a\leq k-r\, .
  \end{aligned}
  \end{equation}
  for some (super)functions $f_i^j,g_i^b,A_a^j,B_a^b$. Thus, for every $t\in \RR$,
  \begin{align*}
    & [\n,X_i] = 0+\sum_{j=r+1}^m \n(f_i^j)\, \pa_{x^j} + \sum_{b=k-r+1}^n  \left(\n(g_i^b)-w_bg_i^b\right)\, \pa_{u^b}\, , \quad 1\leq i\leq r\, ,\\
    & [\n,Y_a] = -w_a\, \pa_{u^a} + \sum_{j=r+1}^m  \n(A_a^j)\, \pa_{x^j} + \sum_{b=k-r+1}^n \left(\n(B_a^b)-w_bB_a^b\right)\,\pa_{u^b}\, , \quad 1\leq a\leq k-r\, .
  \end{align*}
  On the other hand, $[\n,X_i]$ and $[\n,Y_a]$ are linear combinations of $X_1, \ldots, X_r, Y_1, \ldots, Y_{k-r}$, so
  $X_i$ has weight 0, and $Y_a$ has weight $-w_a$, which proves the theorem.

\end{proof}
\no Note that a version of the above theorem has been proven in \cite{Grabowska:2025} for more general homogeneity supermanifolds.
\begin{corollary}
  Let $(\MM, h)$ be an $h$-manifold. A distribution $D\subseteq \T \MM$ of rank $k$ is homogeneous and involutive  if and only if for every $x_0 \in |M|$ there exists a local trivialization $h_0^{-1}(U)\simeq U \times \RR^{\alpha |\beta}$ of the fiber bundle $h_0:\MM\to M$, and $k$ homogeneous pairwise commuting vector fields $Z_1, \ldots, Z_k$, generating $D$ on $h_0^{-1}(U)$.
\end{corollary}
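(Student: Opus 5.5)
The easy direction is direct. If $D$ is generated on $h_0^{-1}(U)$ by pairwise commuting homogeneous vector fields $Z_1,\ldots,Z_k$, then $[Z_i,Z_j]=0\in D$, so $D$ is involutive there. Since $[\n,Z_i]=w_iZ_i\in D$, the preceding theorem gives that $D$ is homogeneous; equivalently, $\T h_t(D)\subseteq D$. Both properties are local, so this direction is finished.

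For the converse, assume $D$ is homogeneous and involutive and fix $x_0\in|M|$. I apply the preceding theorem to obtain a trivialization $h_0^{-1}(U)\simeq U\times\RR^{\alpha|\beta}$ with homogeneous coordinates $(x^1,\dots,x^m;u^1,\dots,u^n)$. In these coordinates $D$ is generated by homogeneous fields $Z_1,\dots,Z_k$ in the normal form \eqref{eq:generators_distribution}: $Z_s=\pa_{z^s}+\sum_{c>k}C_s^c\,\pa_{z^c}$. Here I write $(z^1,\dots,z^k)$ for the chosen coordinates $x^1,\dots,x^r,u^1,\dots,u^{k-r}$ and $(z^c)_{c>k}$ for the remaining ones. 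The weight of $Z_s$ is $-\wa(z^s)$, namely $0$ for the $x^i$ and $-w_a$ for the $u^a$.

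The key step is the standard Lundell-type argument, carried out with signs in the super setting. The bracket $[Z_s,Z_t]$ involves no $\pa_{z^u}$ with $u\le k$, because the leading terms $\pa_{z^s}$ have constant coefficients and so graded-commute. By involutivity, $[Z_s,Z_t]=\sum_u c^u_{st}Z_u$. Comparing the coefficients of $\pa_{z^u}$ for $u\le k$ forces $c^u_{st}=0$, hence $[Z_s,Z_t]=0$. The same argument applies to $[Z_s,Z_s]=2Z_s^2$ when $Z_s$ is odd, so Shander's homological condition also holds.

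What remains is to confirm that the normalization in \eqref{eq:generators_distribution} keeps the generators homogeneous and does not just yield some other set of generators. I will argue this directly. The normal form is unique: it is obtained from any frame by inverting the $k\times k$ block of coefficients along $\pa_{z^1},\dots,\pa_{z^k}$. Since $[\n,\cdot]$ preserves $D$, the fields $[\n,Z_s]+\wa(z^s)Z_s$ lie in $D$ and have no components along $\pa_{z^u}$ with $u\le k$. A section of $D$ with no such components must vanish, because in the normal-form frame its coefficients are exactly those components. Hence each $Z_s$ is homogeneous of weight $-\wa(z^s)$, which is what the preceding theorem's proof already shows. I expect this homogeneity bookkeeping, together with the signs in the super brackets, to be the main point requiring care. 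The rest is formal.
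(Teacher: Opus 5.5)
Your proposal is correct and follows essentially the same route as the paper: take the normal-form homogeneous generators from the preceding theorem, note that their brackets have no components along the first $k$ coordinate directions, and use involutivity to force those brackets to vanish. The extra points you check, namely the trivial converse, the odd self-brackets $[Z_s,Z_s]=2Z_s^2$, and the homogeneity of the normalized frame, are correct, and the paper leaves them implicit.
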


\begin{proof}
  Consider the generators \eqref{eq:generators_distribution} of $D$ on $h_0^{-1}(U)$. Note that $[X_i, X_j],\ [X_i, Y_a]$ and $[Y_a, Y_b]$ are linear combinations of $\pa_{x^{r+1}}, \ldots, \pa_{x^m}, \pa_{u^{k-r+1}}, \ldots, \pa_{u^n}$. On the other hand, since $D$ is involutive, they are linear combinations of $X_1, \ldots, X_r, Y_1, \ldots, Y_{k-r}$. As a result, they all commute.

\end{proof}
\no Note that for odd vector fields $Z_i$, the vanishing of $[Z_i,Z_i]=0$ is a nontrivial condition stating that $Z_i$ is \emph{homological}. Below we prove the homogeneous super-version of the well-known result from the standard differential geometry saying that linearly independent and commuting vector fields can be locally expressed as coordinate vector fields.
\begin{proposition}
  Let $X_1, \ldots, X_k$ be pairwise commuting homogeneous vector fields on an $h$-manifold $(\MM, h)$. If $X_i$ are linearly independent at a point $x_0\in |M|$, then there exists a system of bundle homogeneous coordinates $(x^i)$ centered at $x_0$ such that $X_i = \pa_{x^i},\, 1\leq i\leq k$.
\end{proposition}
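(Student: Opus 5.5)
We will adapt the classical flow-box argument (Lundell's proof, in the super version due to Shander), keeping track of weights at every step. First we fix homogeneous bundle coordinates $(x^a,u^i)$ around $x_0$, given by Theorem~\ref{theorem:homogeneous_trivialization}. Since each $X_i$ is homogeneous, $[\n,X_i]=\wa(X_i)X_i$. Writing $X_i$ in these coordinates, the value $X_i|_{x_0}$ can only involve $\pa_{x^a}$ when $\wa(X_i)=0$, and only those $\pa_{u^j}$ with $w_j=-\wa(X_i)$; all other components vanish on the body because homogeneous coefficient functions of positive weight vanish at $u=0$. By linear independence at $x_0$, we can therefore make a graded linear change of the homogeneous coordinates (allowed, as in the proof of the previous theorem) so that $X_i|_{x_0}=\pa_{z^i}|_{x_0}$. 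Here $(z^1,\dots,z^k)$ is a subfamily of the homogeneous coordinates with $\wa(z^i)=-\wa(X_i)$. We denote the remaining coordinates by $(y^\mu)$; they are homogeneous as well.

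Next we construct the straightening map. We define a morphism $\Psi$ from a neighbourhood of the origin in $\R^{k}\times\{y\}$, with super-coordinates $(s^1,\dots,s^k,y^\mu)$, by composing the (super)flows of $X_1,\dots,X_k$ starting from the slice $\{z=0\}$:
\[
\Psi(s,y)=\Exp(s^1X_1)\circ\cdots\circ\Exp(s^kX_k)(0,y).
\]
For odd $X_i$ the factor $\Exp(s^iX_i)$ is understood as $1+s^iX_i$ acting on functions; this is exactly where $[X_i,X_i]=0$ enters, as in Shander's theorem. Because the $X_i$ pairwise (super)commute, the flows commute, so $\Psi_*\pa_{s^i}=X_i$ for every $i$. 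At $x_0$ the differential of $\Psi$ sends $\pa_{s^i}\mapsto\pa_{z^i}$ and $\pa_{y^\mu}\mapsto\pa_{y^\mu}$, so by the inverse function theorem $\Psi$ is a local diffeomorphism. The functions $x^i:=s^i\circ\Psi^{-1}$, together with the $y^\mu$, then form coordinates in which $X_i=\pa_{x^i}$.

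The main obstacle is showing that these new coordinates are homogeneous and bundle coordinates, i.e.\ that they satisfy \eqref{hoco} on a whole set $h_0^{-1}(U)$, not just near $x_0$. To handle this, we equip the source with the $h$-action $(s^i,y^\mu)\mapsto(t^{-\wa(X_i)}s^i,t^{w_\mu}y^\mu)$ and check that $\Psi$ intertwines it with $h$. For this we use $h_{t*}X_i=t^{-\wa(X_i)}X_i$ for $t\neq0$, which follows from the homogeneity of $X_i$ and integration of $\liedv{\n}X_i=\wa(X_i)X_i$, together with the invariance of the slice $\{z=0\}$ under $h_t$. These give
\[
h_t\circ\Exp(sX_i)=\Exp\bigl(t^{-\wa(X_i)}sX_i\bigr)\circ h_t,
\]
so $\Psi$ is equivariant for $t\neq0$, and by continuity also for $t=0$. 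Equivalently, $\n$ is $\Psi$-related to the source weight field, which makes each $s^i\circ\Psi^{-1}$ homogeneous of weight $-\wa(X_i)$.

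Finally, the even flows are only local, so $\Psi$ is at first defined only near the body point. We extend it to all of $h_0^{-1}(U)$ by the same rescaling trick as at the end of the proof of Theorem~\ref{theorem:homogeneous_trivialization}: we transport by $h_{1/k}$ into the domain and back using equivariance. The transported map is again a diffeomorphism because homogeneous functions are polynomial in the fibre variables, so the change of coordinates is fibrewise polynomial with invertible linear part.
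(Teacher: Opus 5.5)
Your proposal is correct and follows the same route as the paper. The paper also first applies a graded linear change of homogeneous coordinates so that $X_i|_{x_0}=\pa_{y^i}$. It then straightens the fields with the composed (super)flow map started on the slice where the first $k$ coordinates vanish, following Tuynman's Proposition V.6.1 and Shander's theorem.

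Your explicit equivariance step, using $h_{t*}X_i=t^{-\wa(X_i)}X_i$ together with the $h$-invariance of the slice $\{z=0\}$, fills in what the paper compresses into the single remark that $X_i=\pa_{x^i}$ makes the new coordinates homogeneous. Indeed, $X_i=\pa_{x^i}$ alone only yields $\n(x^i)=-\wa(X_i)x^i+c^i(y)$, and it is the slice invariance that forces $c^i=0$. Your extension of the coordinates to all of $h_0^{-1}(U)$ similarly spells out the ``bundle coordinates'' part that the paper leaves implicit.
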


\begin{proof}
  The result can be proven, \textit{mutatis mutandis}, like Proposition V.6.1 in \cite{Tuynman2005}. Let $(y^1, \ldots, y^n)$ be a system of homogeneous coordinates centered at $x_0$. Up to a linear change of homogeneous coordinates, we may assume that $X_i(x_0) = \pa_{y^i}$. The desired change of homogeneous coordinates is given by $(x^1, \ldots, x^k,y^{k+1},\ldots,y^n)= \psi^{-1}(y^1, \ldots, y^n)$, where the map $\psi$ is defined by
  $$\psi(x^1, \ldots,x^k,y^{k+1},\ldots,y^n) = \phi^{X_1} \Big(x^1, \phi^{X_2}\big(x^2, \ldots, \phi^{X_k}(x^k, 0, \ldots, 0, y^{k+1}, \ldots, y^n)\big)\Big)\, ,$$
  with $\phi^{X_i}$ being the commuting flows of $X_i$, $i=1,\dots,k$ (flows with an odd `time' in the case of odd vector fields which, due to commutativity, are integrable). The existence of flows follows, e.g., from the `straightening theorem' \cite{Shander:1980}. In these coordinates, the homogeneous vector fields read $X_i = \pa_{x^i}$, so the coordinates $(x^1, \ldots, x^k,y^{k+1},\ldots,y^n)$ are homogeneous.

\end{proof}
\no Combining the results above, we obtain the homogeneous Frobenius theorem.
\begin{theorem}[Frobenius theorem for homogeneity supermanifolds]\label{theorem:Frobenius}
  A distribution $D\subseteq \T \MM$ of rank $k$ on an $h$-supermanifold $(\MM, h)$ is homogeneous and involutive if and only if, for every $x_0 \in |M|$, there exists a system of bundle homogeneous coordinates $(x^i)$ centered at $x_0$ such that
  $$D = \left\langle \pa_{x^1}, \ldots, \pa_{x^k}\right\rangle\, .$$
\end{theorem}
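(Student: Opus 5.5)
The proof splits into the two implications. For the ``if'' part, suppose that around each $x_0\in|M|$ there are homogeneous bundle coordinates $(x^i)$ with $D=\langle \pa_{x^1},\ldots,\pa_{x^k}\rangle$. Each coordinate vector field $\pa_{x^i}$ is homogeneous: it has weight $0$ if $x^i$ is a base coordinate and weight $-w_i$ if $x^i$ is a fiber coordinate of weight $w_i$. Coordinate vector fields also pairwise (super)commute. Hence $D$ is locally generated by linearly independent homogeneous, pairwise commuting vector fields. It follows that $[\n,\pa_{x^i}]=-w_i\pa_{x^i}\in D$, so $D$ is homogeneous, using the characterization via $\liedv{\n}$ given at the beginning of Section~\ref{sec:Frobenius}. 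The bracket of two $\Cinfty(\MM)$-combinations of commuting generators is again such a combination, so $D$ is involutive. This direction is routine.

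For the ``only if'' part, I chain the results proved just above. Let $D$ be homogeneous and involutive of rank $k$, and fix $x_0\in|M|$.

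\begin{enumerate}
\item The preceding Corollary supplies a local trivialization $h_0^{-1}(U)\simeq U\times\RR^{\alpha|\beta}$ and homogeneous, pairwise commuting vector fields $Z_1,\ldots,Z_k$ generating $D$ on $h_0^{-1}(U)$.
\item Since they generate a rank-$k$ subbundle (a locally free module of rank $k$), the $Z_i$ are linearly independent at $x_0$.
\item The commutation relations include $[Z_i,Z_i]=0$ for the odd $Z_i$, so the preceding Proposition applies. It yields homogeneous bundle coordinates $(x^i)$ centered at $x_0$ with $Z_i=\pa_{x^i}$ for $1\le i\le k$.
\item Therefore $D=\langle\pa_{x^1},\ldots,\pa_{x^k}\rangle$ on the corresponding neighbourhood, as claimed.
\end{enumerate}

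The main point needing care is the straightening step (the Proposition). One must check that the flow-composition map $\psi$ produces coordinates that are genuinely homogeneous, i.e., that $\psi$ intertwines the $h$-action. I would justify this by homogeneity of the $Z_i$: a vector field of weight $w$ satisfies $\T h_t(Z)=t^{-w}Z\circ h_t$ for $t\neq0$. Its flow therefore satisfies $h_t\circ\phi^{Z}(s,\cdot)=\phi^{Z}(t^{-w}s,h_t(\cdot))$. Accordingly, assign the time parameter $x^i$ the weight $-w(Z_i)$, that is, $0$ or $w_i>0$, which is the weight of the corresponding original coordinate. Then $\psi$ is equivariant for $t\ne0$, and by continuity also for $t=0$.

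A second subtlety is that flows of weight-zero fields need not be complete, so the coordinates are obtained only near $x_0$. I would then extend them to a full bundle chart over a neighbourhood of $x_0$ in $M$, using the $h$-action exactly as in the proof of Theorem~\ref{theorem:homogeneous_trivialization}. This extension is where I expect most of the technical bookkeeping.
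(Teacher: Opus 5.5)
Your proof is correct and follows the paper's route. The paper also obtains the theorem by combining the Corollary (local generation of $D$ by homogeneous, pairwise commuting vector fields, whose converse direction is your routine check on coordinate fields) with the straightening Proposition. Your two additions only make explicit what the paper's proof of that Proposition leaves implicit: the equivariance argument for the flow map $\psi$, with each flow parameter given the negative of the weight of $Z_i$, and the extension of the coordinates to a full bundle chart over $h_0^{-1}(U)$ using the $h$-action.
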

\no We finish by relating this result with the Frobenius theorem by Bursztyn, Cueca and Mehta \cite{B.C.M2025}.
Taking into account the relationships discussed in the previous section, the degrees of homogeneous vector fields defined by the homogeneity action and by the grading of the sheaf coincide. More specifically, a vector field $X$ on $\MM$ has $\deg(X) = -k$, in the sense of Definition~\ref{def:distribution_Bursztyn}, if and only if $\liedv{\n} X = -k\cdot X$.
This way, we come to the following conclusion.

\begin{proposition}
  There is a one-to-one correspondence between homogeneous distributions on a homogeneous $\NN$-graded supermanifold and distributions on an algebro-geometric $\NN$-graded manifold.
\end{proposition}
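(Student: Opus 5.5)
The plan is to transport everything through the equivalence of categories between $\mathsf{Man}_{\mathsf{alg}}^n$ and $\mathsf{NMan}^n$ proved in Section~\ref{sec:N_manifolds}. That equivalence identifies the structure sheaves. It also identifies the $\NN$-degree of homogeneous functions with their weight under $h$, and it identifies charts $(U;x^\alpha,e_j^{\beta_j})$ with homogeneous bundle coordinates $(x^a,u^i)$ over $U$.

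\emph{Step 1: vector fields.} A vector field on $\restr{\MM}{U}$ is the same thing on both sides, namely a (super)derivation of $\CM(U)$. I would check that $\deg(X)=-k$ in the sense of Definition~\ref{def:distribution_Bursztyn} holds exactly when $\liedv{\n}X=[\n,X]=-kX$.
\begin{itemize}
\item[($\Rightarrow$)] If $X$ shifts degrees by $-k$, then for homogeneous $f$ of weight $w$ we get $[\n,X]f=\n(Xf)-X(\n f)=(w-k)Xf-wXf=-kXf$. Since $\n$ is even, the bracket is the plain commutator, and since $[\n,X]$ is a derivation this extends to all $f$. Homogeneous functions generate $\CM(U)$ locally, so $[\n,X]=-kX$.
\item[($\Leftarrow$)] If $[\n,X]=-kX$, the same computation shows $X$ maps weight $w$ to weight $w-k$, so $\deg(X)=-k$.
\end{itemize}
The sign rule $(-1)^{k\deg f}$ agrees with the super sign rule because parity equals degree mod $2$. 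This uses that $h_{-1}$ is the parity operator.

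\emph{Step 2: distributions in both directions.}
\begin{itemize}
\item[($\Rightarrow$)] Start from a distribution $D$ in the sense of Definition~\ref{def:distribution_Bursztyn}, locally freely generated by homogeneous $X_k^{i_k}$ of degree $-k$. By Step 1 these are linearly independent homogeneous vector fields of weights $-k$. Locally free generation by vector fields linearly independent at each point gives a vector subbundle of $\T\MM$. By the first (easy) direction of the theorem characterizing homogeneous distributions, it is a homogeneous distribution: each $[\n,X]$ lies in $D$.
\item[($\Leftarrow$)] Start from a homogeneous distribution $D\subseteq\T\MM$ of rank $k$. That theorem gives, near each $x_0$, homogeneous generators $X_1,\dots,X_r,Y_1,\dots,Y_{k-r}$ of weights $0$ and $-w_a$. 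By Step 1 these have degrees $0$ and $-w_a$. Group them by weight to read off the rank $(d_0|\cdots|d_n)$, where $d_j$ counts the generators of weight $-j$. The sheaf of sections of $D$ is then a graded $\CM$-submodule of $\mathscr{T}^\bullet_\MM$ of the required form.
\end{itemize}

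\emph{Step 3: bijectivity.} Both constructions keep the same underlying sheaf of vector fields and merely reinterpret it, so they are mutually inverse.

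The main obstacle I expect is in Step 2 and concerns graded submodules. A subbundle invariant under $\liedv{\n}$ must give a \emph{graded} subsheaf, with each section a sum of homogeneous components lying in $D$. I would argue this from the local homogeneous generators. A section $\sum f^iZ_i$ decomposes by expanding the coefficients $f^i$ into homogeneous pieces, which are polynomial in the $u$'s by the earlier Proposition on homogeneous functions. Conversely, one must ensure that the numbers $d_j$ are locally constant, so that the rank is well defined. This follows because the weights of the generators at $x_0$ are fixed by the grading of $D_{x_0}$ and persist in the local trivialization.
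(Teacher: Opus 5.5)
Your proposal is correct and follows essentially the paper's own route: the paper also obtains the correspondence by combining two facts. The first is that $\deg(X)=-k$ holds exactly when $\liedv{\n}X=-k\cdot X$, transported through the equivalence $\mathsf{Man}_{\mathsf{alg}}^n\simeq\mathsf{NMan}^n$. The second is the preceding theorem that homogeneous distributions are exactly those locally generated by homogeneous vector fields. You spell out the bracket computation and the gradedness check that the paper leaves implicit.

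One phrasing to tighten: when there are even coordinates of positive weight, $\CM$ is polynomial in them. So the graded subsheaf attached to $D$ should be the $\CM$-span of your homogeneous local generators, i.e.\ the polynomial sections of $D$, not its full sheaf of smooth sections. That span is precisely what your decomposition argument shows to be graded.
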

\no
Consequently, the main theorem \cite[Theorem 2.3]{B.C.M2025} is a direct corollary of Theorem~\ref{theorem:Frobenius}.






\bibliographystyle{plain}
\bibliography{biblio}

\vskip.5cm
\noindent Janusz Grabowski\\\emph{Institute of Mathematics, Polish Academy of Sciences}\\{\small ul. \'Sniadeckich 8, 00-656 Warszawa, Poland}\\
{\tt jagrab@impan.pl}\\  https://orcid.org/0000-0001-8715-2370
\\

\noindent Asier L\'opez-Gord\'on\\\emph{Institute of Mathematics, Polish Academy of Sciences}\\{\small ul. \'Sniadeckich 8, 00-656 Warszawa, Poland}\\
{\tt alopez-gordon@impan.pl}\\  https://orcid.org/0000-0002-9620-9647
\\

\end{document}